     \def\section{\@startsection{section}{1}%
     \z@{.7\linespacing\@plus\linespacing}{.5\linespacing}%
     {\bfseries
     \centering
     }}
     \def\@secnumfont{\bfseries}
\newtheorem{theorem}{Theorem}[section]
\newtheorem{lemma}[theorem]{Lemma}
\newtheorem{corollary}[theorem]{Corollary}
\theoremstyle{definition}
\newtheorem{definition}[theorem]{Definition}
\newtheorem{example}[theorem]{Example}
\theoremstyle{remark}
\newtheorem{remark}[theorem]{Remark}
\numberwithin{equation}{section}
\begin{document}

\title[Action functionals for SDE with L\'evy noise]{Action functionals for stochastic differential equations with L\'evy noise}

\author{Shenglan Yuan}
\address{Shenglan Yuan: Center for Mathematical Sciences, Huazhong University of Sciences and Technology, Wuhan, 430074, China}
\email{shenglanyuan@hust.edu.cn}

\author{Jinqiao Duan}
\address{Jinqiao Duan: Department of Applied Mathematics, Illinois Institute of Technology, Chicago, IL 60616, USA}
\email{duan@iit.edu}

\subjclass[2010] {Primary 60F10; Secondary 65C30}

\keywords{Action functionals; Large deviation; Stochastic differential equations; L\'evy noise}

\begin{abstract}
By using large deviation theory that deals with
the decay of probabilities of rare events on an exponential scale, we study the longtime behaviors and establish action functionals for scaled Brownian motion and L\'evy processes with existing finite exponential moments. Based on extended contraction principle, Legendre transform and L\'evy symbols,
we derive the action functionals for stochastic differential equations driven by L\'evy processes.
\end{abstract}

\maketitle

\section{Introduction}
 Stochastic effects are ubiquitous in complex systems of science and engineering \cite{Ar13}. Although random mechanisms may appear to be very small or very fast, their long time impacts on the system evolution may be delicate or even profound \cite{Ku97}. Mathematical modeling of complex systems under uncertainty often leads to stochastic differential equations (SDEs); see \cite{BSW14,Ku04,Pr04,Ro05}. Fluctuations appeared in the SDEs are often non-Gaussian rather than Gaussian.

In the last years, the long time large deviations behavior of slow-fast systems has attracted more and more attention because of the various applications in the fields of statistical physics, engineering, chemistry and financial mathematics \cite{BCD13,FK06,Ki04}. Large deviations for SDEs driven by Brownian motion and random meaures are proved in the monographs \cite{DZ10,G08,LP92}. Large deviation results for SDEs with L\'evy noise are obtained by \cite{D18,K14}.

Action functionals play an important role for large deviation theory \cite{FW98,L10}.
The main goal of this paper is to derive the action funtionals for SDEs driven by L\'evy processes of the form
\begin{equation*}
dX_{t}^{\varepsilon}=b(X_{t-}^{\varepsilon})dt+\sqrt{\varepsilon}\sigma(X_{t-}^{\varepsilon})dB_{t}+\eta(X_{t-}^{\varepsilon})dL_{t}^{\varepsilon},
\end{equation*}
where $L_{t}^{\varepsilon}:=\varepsilon L_{\frac{t}{\varepsilon}}$ is a scaled L\'evy process with finite exponential moments.

We first show that the scaled L\'evy process satisfies a large deviation principle, and obtain its action functional.
Then we construct continuous mappings to get an exponentially good approximations .
Finally, we derive the action functionals for solutions of SDEs with L\'evy noise by using extended contraction principle, Legendre transform and L\'evy symbols. To keep notation as simple as possible, we restrict ourselves to one-dimensional processes. Most of the results can be proved in a similar fashion as for multi-dimensional processes.

 In Section 2, we recall some basic concepts, and introduce extensions of the contraction principle.
 In Section 3, we focus on action functionals for scaled Brownian motion, and obtain the action functionals for solutions of SDEs driven by Brownian motion (Lemma \ref{Bro} and Theorem \ref{hai}).
 In Section 4, we establish action functionals for scaled L\'evy processes, and consider the action functionals for solutions of SDEs with L\'evy noise  (Lemma \ref{iBro}, Theorem \ref{lan} and Corollary \ref{tai}).

\section{Prelimilaries}
Throughout, $(\Omega,\mathcal{F},\mathbb{P})$ is a probability space. We consider Euclidean space $\mathbb{R}$
endowed with the Borel $\sigma$-algebra $\mathcal{B}(\mathbb{R})$. Let $\lfloor x\rfloor$ denote the integer part of $x\in\mathbb{R}$. As usual, $\mathcal{C}[0,1]$ is the space of all continuous functions $f: [0,1]\rightarrow\mathbb{R}$ such that $f(0)=0$,
equipped with the uniform norm
\begin{equation*}
\|f\|_{\infty}:=\sup_{t\in[0,1]}|f(t)|.
\end{equation*}
We denote by $\mathcal{D}[0,1]$ the space of real-valued c\'adl\'ag (right continuous with finite left limits) functions on $[0,1]$ endowed with
the supremum norm topology, and the $\sigma$-algebra $\mathcal{B}:=\sigma(\pi_{t}; t\in[0,1])$ generated by the projections $\pi_{t}: \mathcal{D}[0,1]\rightarrow\mathbb{R}$, $f\mapsto f(t), t\in[0,1]$. Note that $\mathcal{B}$ equals the Borel $\sigma$-algebra generated by the $J_{1}$-metric. Let $AC[0,1]$ denotes the space of all absolutely continuous functions with value
$0$ at $0$. The notation $BV[0,1]$ is the space of functions with bounded variation.\\

To transform a large deviation principle under a continuous mapping, we introduce the following contraction principle and investigate its extensions. They will be a crucial tool for studying action functionals of SDEs with L\'evy noise.

\begin{theorem}(\,\cite[Theorem 4.2.1]{DZ10}\,)
Let $(M_{1},d_{1})$, $(M_{2},d_{2})$ be metric spaces and $f: M_{1}\rightarrow M_{2}$ be a continuous function. Suppose that a family $(\mu_{\varepsilon})_{\varepsilon>0}$ of probability measures on $M_{1}$ satisfies a large deviation principle with action functional $I$.
Then the sequence of image measures $(\nu_{\varepsilon})_{\varepsilon>0}$ defined by $\nu_{\varepsilon}:=\mu_{\varepsilon}\circ f^{-1}$ on $M_{2}$, obeys a large deviation principle with action functional
\begin{equation*}
S(y):=\inf\{I(x): x\in M_{1}, y=f(x)\}.
\end{equation*}
\end{theorem}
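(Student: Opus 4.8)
The plan is to verify directly the three ingredients of a large deviation principle for the image measures $(\nu_{\varepsilon})_{\varepsilon>0}$: that $S$ is a (good) action functional, the large deviation lower bound on open sets, and the large deviation upper bound on closed sets. The single computation that drives everything is the elementary rearrangement of infima
\begin{equation*}
\inf_{x\in f^{-1}(A)}I(x)=\inf_{y\in A}\ \inf_{x:\,f(x)=y}I(x)=\inf_{y\in A}S(y),\qquad A\subseteq M_{2},
\end{equation*}
combined with the observation that, $f$ being continuous, $f^{-1}(G)$ is open whenever $G$ is open and $f^{-1}(F)$ is closed whenever $F$ is closed.

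First I would check that $S$ is an action functional, with compact sublevel sets. By the convention $\inf\emptyset=+\infty$, the map $S:M_{2}\to[0,\infty]$ is well defined. For a level $\alpha\ge 0$ I claim $\{y:S(y)\le\alpha\}=f(\{x:I(x)\le\alpha\})$. The inclusion ``$\supseteq$'' is immediate from the definition of $S$. For ``$\subseteq$'', fix $y$ with $S(y)\le\alpha$ and pick $x_{n}$ with $f(x_{n})=y$ and $I(x_{n})\le\alpha+\tfrac1n$; since $I$ has compact sublevel sets, the $x_{n}$ lie in the compact set $\{I\le\alpha+1\}$, so along a subsequence $x_{n_{k}}\to x$, whence $f(x)=\lim_{k}f(x_{n_{k}})=y$ by continuity of $f$, and $I(x)\le\liminf_{k}I(x_{n_{k}})\le\alpha$ by lower semicontinuity of $I$. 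Thus $y=f(x)\in f(\{I\le\alpha\})$. Since $\{I\le\alpha\}$ is compact and $f$ continuous, $f(\{I\le\alpha\})$ is compact, hence closed; so every sublevel set of $S$ is compact, and in particular $S$ is lower semicontinuous.

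Next, the lower bound. For open $G\subseteq M_{2}$ the set $f^{-1}(G)$ is open, so the hypothesised large deviation principle for $(\mu_{\varepsilon})$ yields
\begin{equation*}
\liminf_{\varepsilon\to0}\varepsilon\log\nu_{\varepsilon}(G)=\liminf_{\varepsilon\to0}\varepsilon\log\mu_{\varepsilon}\bigl(f^{-1}(G)\bigr)\ge-\inf_{x\in f^{-1}(G)}I(x)=-\inf_{y\in G}S(y),
\end{equation*}
the last step being the displayed identity with $A=G$. Symmetrically, for closed $F\subseteq M_{2}$ the set $f^{-1}(F)$ is closed, and the upper bound for $(\mu_{\varepsilon})$ gives
\begin{equation*}
\limsup_{\varepsilon\to0}\varepsilon\log\nu_{\varepsilon}(F)=\limsup_{\varepsilon\to0}\varepsilon\log\mu_{\varepsilon}\bigl(f^{-1}(F)\bigr)\le-\inf_{x\in f^{-1}(F)}I(x)=-\inf_{y\in F}S(y).
\end{equation*}
Together with the compactness of the sublevel sets of $S$ from the previous step, these two estimates are precisely the statement that $(\nu_{\varepsilon})$ obeys a large deviation principle with action functional $S$.

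I do not expect a genuine obstacle: the proof is essentially bookkeeping with infima plus the topological fact that continuous preimages preserve openness and closedness. The one point that needs actual care is the compactness of the sublevel sets of $S$, where one must use that $I$ has compact sublevel sets (so the approximate minimizers $x_{n}$ are precompact and a limit point exists) rather than merely that $I$ is lower semicontinuous; without that, one only gets lower semicontinuity of $S$ by the same subsequence argument restricted to a fixed sublevel set.
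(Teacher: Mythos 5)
Your proof is correct and follows essentially the same route as the paper's: identify the sublevel sets of $S$ as $f(\{I\le\alpha\})$ to get compactness, then transfer the lower and upper bounds through the open/closed preimages $f^{-1}(G)$, $f^{-1}(F)$ using the identity $\inf_{f^{-1}(A)}I=\inf_{A}S$. Your subsequence argument for the attainment of the infimum (placing approximate minimizers in the compact set $\{I\le\alpha+1\}$) is in fact a more careful rendering of the paper's terser remark that $I$ ``attains its minimum on compact sets.''
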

\begin{proof}
Since $I$ is lower semicontinuous, it attains its minimum on compact sets. This implies that for any $y\in M_{2}$ and $S(y)<\infty$,
there exists $x\in M_{1}$ such that $f(x)=y$ and $S(y)=I(x)$. Then
\begin{equation*}
\Phi_{S}(r)=\{y\in M_{2}; S(y)\leq r\}=f(\Phi_{I}(r))~~~~\text{for}~~~~r\geq0.
\end{equation*}
In particular, $\Phi_{S}(r)$ is compact, i.e., $S$ is an action functional. Now let $U$ be an open set in $M_{1}$. Since $f$ is continuous, we know
$f^{-1}(U)$ is open. Apply the large deviation lower bound to $f^{-1}(U)$  and obtain
\begin{equation*}
\liminf_{\varepsilon\rightarrow0}\varepsilon\log\nu_{\varepsilon}(U)=\liminf_{\varepsilon\rightarrow0}\varepsilon\log\mu_{\varepsilon}(f^{-1}(U))\geq-\inf_{x\in f^{-1}(U)}I(x)=-\inf_{y\in U}S(y).
\end{equation*}
When $F$ is a closed set in $M_{1}$, the upper bound
\begin{equation*}
\limsup_{\varepsilon\rightarrow0}\varepsilon\log\nu_{\varepsilon}(F)=\limsup_{\varepsilon\rightarrow0}\varepsilon\log\mu_{\varepsilon}(f^{-1}(F))\leq-\inf_{x\in f^{-1}(F)}I(x)=-\inf_{y\in F}S(y)
\end{equation*}
follows in the same way.
\end{proof}

The following theorem provides a relation between large deviation principles of exponentially good approximations.

\begin{theorem}\label{ega}
Let $(X_{\varepsilon,m})_{\varepsilon>0,m\in\mathbb{N}}$ be an exponentially good approximation of $(X_{\varepsilon})_{\varepsilon>0}$ on a metric space $(M,d)$, i.e,
\begin{equation}\label{ie}
\lim_{m\rightarrow\infty}\limsup_{\varepsilon\rightarrow0}\varepsilon\log\mathbb{P}(d(X_{\varepsilon,m},X_{\varepsilon})>\delta)=-\infty,\,\,\,\,\text{for all}\,\,\,\,\delta>0,
\end{equation}
 such that
$X_{\varepsilon,m}$ satisfies a large deviation principle with action functional $S_{m}$ as $\varepsilon\rightarrow0$.\\
(i) $(X_{\varepsilon})_{\varepsilon>0}$ satisfies a weak large deviation principle with action functional
\begin{equation}\label{srf}
S(x):=\sup_{\delta>0}\liminf_{m\rightarrow\infty}\inf_{y\in B(x,\delta)}S_{m}(y).
\end{equation}
(ii) If $S$ is an action functional and
\begin{equation}\label{rf}
\inf_{x\in F}S(x)\leq\sup_{\delta>0}\limsup_{m\rightarrow\infty}\inf_{x\in F}S_{m}(x).
\end{equation}
holds for each closed set $F\subseteq M$, then $(X_{\varepsilon})_{\varepsilon>0}$ satisfies a large deviation principle with action functional $S$.
\end{theorem}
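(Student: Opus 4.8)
The plan is to follow the classical scheme for transporting large deviation estimates along an exponentially good approximation, treating the two parts separately, and relying throughout on two elementary facts about the functionals $\varepsilon\log(\cdot)$: for nonnegative $a_\varepsilon,c_\varepsilon$ one has the principle of the largest term $\limsup_{\varepsilon\to0}\varepsilon\log(a_\varepsilon+c_\varepsilon)=\max\{\limsup_{\varepsilon\to0}\varepsilon\log a_\varepsilon,\ \limsup_{\varepsilon\to0}\varepsilon\log c_\varepsilon\}$, and if $\limsup_{\varepsilon\to0}\varepsilon\log c_\varepsilon<\liminf_{\varepsilon\to0}\varepsilon\log a_\varepsilon$ then $a_\varepsilon-c_\varepsilon>0$ for small $\varepsilon$ and $\liminf_{\varepsilon\to0}\varepsilon\log(a_\varepsilon-c_\varepsilon)=\liminf_{\varepsilon\to0}\varepsilon\log a_\varepsilon$. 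For part (i) I would first check that $S$ in \eqref{srf} is lower semicontinuous: if $x_n\to x$ and $\delta>0$, then $B(x_n,\delta/2)\subseteq B(x,\delta)$ for $n$ large, so $S(x_n)\ge\liminf_m\inf_{B(x,\delta)}S_m$, and taking the supremum over $\delta$ gives $\liminf_nS(x_n)\ge S(x)$. For the lower bound, fix an open set $G$ and $x\in G$ with $S(x)<\infty$, choose $\delta>0$ with $B(x,2\delta)\subseteq G$, and use the inclusion $\{X_{\varepsilon,m}\in B(x,\delta)\}\cap\{d(X_{\varepsilon,m},X_\varepsilon)\le\delta\}\subseteq\{X_\varepsilon\in B(x,2\delta)\}$ to get $\mathbb P(X_\varepsilon\in B(x,2\delta))\ge\mathbb P(X_{\varepsilon,m}\in B(x,\delta))-\mathbb P(d(X_{\varepsilon,m},X_\varepsilon)>\delta)$. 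Passing to a subsequence $m_k$ realizing $\liminf_m\inf_{B(x,\delta)}S_m$ (which is $\le S(x)<\infty$), the LDP lower bound for $X_{\varepsilon,m_k}$ keeps $\liminf_{\varepsilon\to0}\varepsilon\log\mathbb P(X_{\varepsilon,m_k}\in B(x,\delta))\ge-\inf_{B(x,\delta)}S_{m_k}$ bounded below, while \eqref{ie} makes the subtracted term exponentially negligible for $k$ large; the two facts above then yield $\liminf_{\varepsilon\to0}\varepsilon\log\mathbb P(X_\varepsilon\in G)\ge-\inf_{B(x,\delta)}S_{m_k}$, and letting $k\to\infty$ and optimizing over $x\in G$ gives $\liminf_{\varepsilon\to0}\varepsilon\log\mathbb P(X_\varepsilon\in G)\ge-\inf_{x\in G}S(x)$.

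For the weak upper bound, let $K$ be compact and fix $\gamma<\inf_{x\in K}S(x)$ (any finite $\gamma$ if that infimum is $+\infty$). Since the supremum in \eqref{srf} runs over shrinking balls, for each $x\in K$ there is $\delta_x>0$ with $\liminf_m\inf_{B(x,3\delta_x)}S_m>\gamma$, and hence an $m$, along a suitable subsequence, with both $\inf_{B(x,3\delta_x)}S_m>\gamma$ and, by \eqref{ie}, $\limsup_{\varepsilon\to0}\varepsilon\log\mathbb P(d(X_{\varepsilon,m},X_\varepsilon)>\delta_x)\le-\gamma$. From $\{X_\varepsilon\in\overline{B}(x,\delta_x)\}\subseteq\{X_{\varepsilon,m}\in\overline{B}(x,2\delta_x)\}\cup\{d(X_{\varepsilon,m},X_\varepsilon)>\delta_x\}$, the principle of the largest term, and the LDP upper bound for $X_{\varepsilon,m}$ applied to the closed set $\overline{B}(x,2\delta_x)\subseteq B(x,3\delta_x)$, I obtain $\limsup_{\varepsilon\to0}\varepsilon\log\mathbb P(X_\varepsilon\in\overline{B}(x,\delta_x))\le-\gamma$. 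Covering $K$ by finitely many balls $B(x_i,\delta_{x_i})$ and using the principle of the largest term once more gives $\limsup_{\varepsilon\to0}\varepsilon\log\mathbb P(X_\varepsilon\in K)\le-\gamma$, and letting $\gamma\uparrow\inf_KS$ completes the weak LDP for $(X_\varepsilon)_{\varepsilon>0}$ with action functional $S$.

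For part (ii) it suffices to promote the compact upper bound to arbitrary closed sets; together with the open lower bound from (i) and the standing hypothesis that $S$ is an action functional, this gives the full LDP with action functional $S$. Fix a closed set $F$ and $\delta>0$, and set $F^\delta:=\{x:\,d(x,F)\le\delta\}$, which is closed. From $\{X_\varepsilon\in F\}\subseteq\{X_{\varepsilon,m}\in F^\delta\}\cup\{d(X_{\varepsilon,m},X_\varepsilon)>\delta\}$, the principle of the largest term, and the LDP upper bound for $X_{\varepsilon,m}$ one has, for every $m$, $\limsup_{\varepsilon\to0}\varepsilon\log\mathbb P(X_\varepsilon\in F)\le\max\{-\inf_{F^\delta}S_m,\ \limsup_{\varepsilon\to0}\varepsilon\log\mathbb P(d(X_{\varepsilon,m},X_\varepsilon)>\delta)\}$; letting $m\to\infty$ and invoking \eqref{ie} to discard the second term gives $\limsup_{\varepsilon\to0}\varepsilon\log\mathbb P(X_\varepsilon\in F)\le-\limsup_m\inf_{F^\delta}S_m$. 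Applying hypothesis \eqref{rf} to the closed set $F^\delta$ bounds the right side by $-\inf_{F^\delta}S$. Finally, since $S$ is an action functional its level sets are compact and $F^\delta\downarrow F$ as $\delta\downarrow0$, so $\inf_{F^\delta}S\uparrow\inf_FS$: near-minimizers of $S$ over the $F^\delta$ eventually lie in one fixed compact level set, any convergent subsequence has limit in $\bigcap_{\delta>0}F^\delta=F$, and lower semicontinuity finishes the argument. Hence $\limsup_{\varepsilon\to0}\varepsilon\log\mathbb P(X_\varepsilon\in F)\le-\inf_{x\in F}S(x)$, which is the closed-set upper bound.

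I expect the main obstacles to be, in part (i), organizing the nested limits $\varepsilon\to0$, $m\to\infty$, $\delta\to0$ so that the subtracted exceptional probability is genuinely negligible — this is why one must fix and pass to a subsequence in $m$ before sending $\varepsilon\to0$ — and, in part (ii), the monotone convergence $\inf_{F^\delta}S\uparrow\inf_FS$, which is precisely the step where the assumption that $S$ is an action functional (compact level sets) cannot be removed.
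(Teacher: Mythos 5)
Your proof is correct, and it rests on the same two core estimates as the paper's: the inclusion $\{X_{\varepsilon,m}\in B(x,\delta)\}\subseteq\{X_{\varepsilon}\in B(x,2\delta)\}\cup\{d(X_{\varepsilon,m},X_{\varepsilon})>\delta\}$ (and its reverse), combined with the largest-term principle and \eqref{ie}. The organization of part (i) is genuinely different, though. The paper proves the two-sided pointwise identity $S(x)=-\inf_{\delta>0}\limsup_{\varepsilon\to0}\varepsilon\log\mathbb{P}(X_{\varepsilon}\in B[x,\delta])=-\inf_{\delta>0}\liminf_{\varepsilon\to0}\varepsilon\log\mathbb{P}(X_{\varepsilon}\in B(x,\delta))$ and then relies (implicitly, via the standard Dembo--Zeitouni equivalence) on the fact that this local characterization of the rate is equivalent to the weak LDP; you instead prove the open-set lower bound directly (using a subtraction argument in place of the paper's max-of-sums bound, which requires -- and you correctly arrange -- that the discarded probability have strictly smaller exponential rate along the chosen subsequence $m_k$) and the compact-set upper bound by a finite covering of $K$ by balls $B(x_i,\delta_{x_i})$. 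Your route is longer but self-contained, and it supplies two verifications the paper omits entirely: the lower semicontinuity of $S$ defined by \eqref{srf}, and, in part (ii), the justification of $\lim_{\delta\to0}\inf_{F+B[0,\delta]}S=\inf_{F}S$ via compactness of the level sets of $S$ together with lower semicontinuity -- which, as you rightly emphasize, is precisely where the hypothesis that $S$ is an action functional enters and cannot be dropped. Part (ii) is otherwise structurally identical in the two proofs, both passing through the closed $\delta$-fattening of $F$ and hypothesis \eqref{rf}.
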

\begin{proof}
(i) In order to prove \eqref{srf}, it suffices to show that for any $x\in M$,
\begin{equation}\label{show}
S(x)=-\inf_{\delta>0}\limsup_{\varepsilon\rightarrow0}\varepsilon\log\mathbb{P}(X_{\varepsilon}\in B[x,\delta])=-\inf_{\delta>0}\liminf_{\varepsilon\rightarrow0}\varepsilon\log\mathbb{P}(X_{\varepsilon}\in B(x,\delta)).
\end{equation}
 Fix $\delta>0$ and $x\in M$. From
\begin{equation*}
\mathbb{P}(X_{\varepsilon,m}\in B(x,\delta))\leq\mathbb{P}(X_{\varepsilon}\in B(x,2\delta))+\mathbb{P}(d(X_{\varepsilon,m},X_{\varepsilon})>\delta),
\end{equation*}
we find, by the large deviation lower bound for $(X_{\varepsilon,m})_{\varepsilon>0}$,
\begin{align*}
-\inf_{y\in B(x,\delta)}S_{m}(y)&\leq\liminf_{\varepsilon\rightarrow0}\varepsilon\log\mathbb{P}(X_{\varepsilon,m}\in B(x,\delta))\\
                                &\leq\max\{\liminf_{\varepsilon\rightarrow0}\varepsilon\log\mathbb{P}(X_{\varepsilon}\in B(x,2\delta)),\limsup_{\varepsilon\rightarrow0}\varepsilon\log\mathbb{P}(d(X_{\varepsilon,m},X_{\varepsilon})>\delta)\}.
\end{align*}
Since $(X_{\varepsilon,m})_{\varepsilon>0,m\in\mathbb{N}}$ is an exponentially good approximation,
\begin{equation}\label{get}
\inf_{\delta>0}\liminf_{\varepsilon\rightarrow0}\varepsilon\log\mathbb{P}(X_{\varepsilon}\in B(x,2\delta))\geq\inf_{\delta>0}\limsup_{m\rightarrow\infty}(-\inf_{y\in B(x,\delta)}S_{m}(y))=-S(x).
\end{equation}
By interchanging the roles of $X_{\varepsilon,m}$ and $X_{\varepsilon}$, we get
\begin{multline*}
\limsup_{\varepsilon\rightarrow0}\varepsilon\log\mathbb{P}(X_{\varepsilon}\in B[x,\delta])\\
~~~~~~~\leq\max\{\liminf_{\varepsilon\rightarrow0}\varepsilon\log\mathbb{P}(X_{\varepsilon}\in B[x,2\delta]),\limsup_{\varepsilon\rightarrow0}\varepsilon\log\mathbb{P}(d(X_{\varepsilon,m},X_{\varepsilon})>\delta)\}.
\end{multline*}
Therefore, by the large deviation upper bound for $(X_{\varepsilon,m})_{\varepsilon>0}$ and \eqref{ie},
we obtain
\begin{equation}\label{we}
\inf_{\delta>0}\limsup_{\varepsilon\rightarrow0}\varepsilon\log\mathbb{P}(X_{\varepsilon}\in B[x,\delta])\leq\inf_{\delta>0}\limsup_{m\rightarrow\infty}(-\inf_{y\in B[x,2\delta]}S_{m}(y))=-S(x).
\end{equation}
Combining \eqref{get} and \eqref{we} yields \eqref{show}.

(ii) From the first part of this theorem that $(X_{\varepsilon})_{\varepsilon>0}$ satisfies a weak large deviation principle,
it remains to show the large deviation upper bound for any closed set $F\subseteq M$. Fix $\delta>0$, the large deviation upper bound for
$(X_{\varepsilon,m})_{\varepsilon>0}$ implies
\begin{align*}
  &\limsup_{\varepsilon\rightarrow0}\varepsilon\log\mathbb{P}(X_{\varepsilon}\in F)\\
  &\leq\max\{\limsup_{\varepsilon\rightarrow0}\varepsilon\log\mathbb{P}(X_{\varepsilon,m}\in F+B[0,\delta]),\limsup_{\varepsilon\rightarrow0}\varepsilon\log\mathbb{P}(d(X_{\varepsilon,m},X_{\varepsilon})>\delta)\}\\
  &\leq\max\{-\inf_{x\in F+B[0,\delta]}S_{m}(x),\limsup_{\varepsilon\rightarrow0}\varepsilon\log\mathbb{P}(d(X_{\varepsilon,m},X_{\varepsilon})>\delta)\}.
\end{align*}
Consequently, by \eqref{ie}
and \eqref{rf},
\begin{align*}
\limsup_{\varepsilon\rightarrow0}\varepsilon\log\mathbb{P}(X_{\varepsilon}\in F)&\leq-\lim_{\delta\rightarrow0}\limsup_{m\rightarrow\infty}\inf_{x\in F+B[0,\delta]}S_{m}(x)\\
                                                                                &\leq-\lim_{\delta\rightarrow0}\inf_{x\in F+B[0,\delta]}S(x)=-\inf_{x\in F}S(x).
\end{align*}
This finishes the proof.
\end{proof}

Now we present the extended contraction principle.
\begin{theorem}\label{ecp}
Let $(M_{1},d_{1})$, $(M_{2},d_{2})$ be metric spaces and $(X_{\varepsilon})_{\varepsilon >0}$ denotes a family of random variables obeying a large deviation principle in $(M_{1},d_{1})$ with action functional $I$. For $m\in\mathbb{N}$, let $f_{m}: M_{1}\rightarrow M_{2}$ be continuous functions and $f: M_{1}\rightarrow M_{2}$ measurable such that
\begin{equation}\label{di}
\limsup_{m\rightarrow\infty}\sup_{\{x:I(x)\leq r\}}d_{2}(f_{m}(x),f(x))=0~~~~~\text{for all}~~~~~{r\geq0}.
\end{equation}
Then for any family of random variables $(Y_{\varepsilon})_{\varepsilon >0}$ for which $(f_{m}(X_{\varepsilon}))_{\varepsilon>0, m\in\mathbb{N}}$ is an exponentially good approximation holds a large deviation principle with action functional
\begin{equation*}
S(y)=\inf\{I(x): y=f(x)\}.
\end{equation*}
\end{theorem}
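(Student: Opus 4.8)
The plan is to deduce the statement from the contraction principle (\cite[Theorem 4.2.1]{DZ10}) together with Theorem \ref{ega}. Since each $f_m$ is continuous and $(X_\varepsilon)_{\varepsilon>0}$ satisfies an LDP in $M_1$ with action functional $I$, the contraction principle shows that $(f_m(X_\varepsilon))_{\varepsilon>0}$ satisfies an LDP in $M_2$ with the action functional $S_m(y):=\inf\{I(x): f_m(x)=y\}$ (convention $\inf\emptyset:=+\infty$). By hypothesis $(f_m(X_\varepsilon))_{\varepsilon>0,m\in\mathbb{N}}$ is an exponentially good approximation of $(Y_\varepsilon)_{\varepsilon>0}$, so Theorem \ref{ega}(i) immediately gives a weak LDP for $(Y_\varepsilon)_{\varepsilon>0}$ with the action functional $S$ defined by \eqref{srf} for these $S_m$. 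It then remains to do three things: (a) show that this $S$ equals $\inf\{I(x): y=f(x)\}$; (b) show that $S$ is an action functional; and (c) verify condition \eqref{rf} of Theorem \ref{ega}(ii), whereupon part (ii) of that theorem finishes the proof.

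All of (a)--(c) rest on one observation: by \eqref{di} the maps $f_m$ converge to $f$ uniformly on each level set $\{x: I(x)\le r\}$, hence, since the $f_m$ are continuous and $\{I\le r\}$ is compact, the restriction $f|_{\{I\le r\}}$ is continuous (a uniform limit of continuous functions). For (b) I would use compactness of $\{I\le r\}$ and continuity of $f$ on it to see that the infimum defining $S(y)$ is attained whenever finite, so that $\{y: S(y)\le r\}=f(\{x: I(x)\le r\})$ is a continuous image of a compact set, hence compact. For (a), one inequality is immediate: given $x$ with $f(x)=y$ and $I(x)\le r$, the uniform convergence gives $f_m(x)\to y$, so $\inf_{z\in B(y,\delta)}S_m(z)\le S_m(f_m(x))\le I(x)\le r$ for all large $m$ and every $\delta>0$, whence the expression \eqref{srf} at $y$ is $\le r$; letting $r$ decrease to $\inf\{I(x): f(x)=y\}$ settles this direction. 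For the converse I would fix $r>S(y)$ (the case $S(y)=\infty$ being trivial); since $S(y)\ge\liminf_m\inf_{z\in B(y,1/n)}S_m(z)$ for every $n$, one can choose indices $m(n)\to\infty$, points $z_n\in B(y,1/n)$ and $x_n\in\{I\le r\}$ with $f_{m(n)}(x_n)=z_n$ (the infimum $S_{m(n)}(z_n)<r$ being attained because $\{f_{m(n)}=z_n\}$ is closed and $\{I\le r\}$ is compact); a convergent subsequence $x_{n_j}\to x\in\{I\le r\}$ then satisfies $f(x)=y$ because of the estimate
\begin{equation*}
d_2(f(x),y)\le d_2(f(x),f(x_{n_j}))+\sup_{\{I\le r\}}d_2(f,f_{m(n_j)})+d_2(z_{n_j},y),
\end{equation*}
whose right-hand side tends to $0$ (respectively by continuity of $f$ on $\{I\le r\}$, by \eqref{di}, and because $z_{n_j}\in B(y,1/n_j)$); hence $\inf\{I(x'): f(x')=y\}\le I(x)\le r$, and letting $r$ decrease finishes (a).

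For (c), fix a closed set $F\subseteq M_2$ and put $a:=\limsup_{m\to\infty}\inf_{x\in F}S_m(x)=\limsup_{m\to\infty}\inf\{I(z): f_m(z)\in F\}$; assuming $a<\infty$, I would pick $m_j\to\infty$ realizing the $\limsup$ and points $z_j$ lying in a fixed compact level set of $I$ with $f_{m_j}(z_j)\in F$ and $I(z_j)$ within $1/j$ of the $j$-th infimum, so that a convergent subsequence $z_{j_k}\to z$ has $I(z)\le a$ and, by the same estimate as above, $f(z)=\lim_k f_{m_{j_k}}(z_{j_k})\in\overline{F}=F$; therefore $\inf_{x\in F}S(x)\le I(z)\le a$, which is \eqref{rf}. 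Theorem \ref{ega}(ii) then yields the LDP for $(Y_\varepsilon)_{\varepsilon>0}$ with action functional $S(y)=\inf\{I(x): y=f(x)\}$. The hard part will be exactly the two limit-passage arguments above: $f$ is assumed merely measurable, so one cannot pass limits under $f$ globally; the resolution is that $f$ is automatically continuous on every compact level set of $I$, and one then needs the diagonal choice $\delta=1/n$ to upgrade the approximate bound $d_2(f(x),y)\le\delta$ to the exact equality $f(x)=y$.
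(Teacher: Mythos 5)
Your proposal is correct and follows essentially the same route as the paper: apply the contraction principle to get the rate functions $S_m$, observe that \eqref{di} makes $f$ continuous on each compact sublevel set $\{I\le r\}$ (so that $S$ is a good action functional with level sets $f(\{I\le r\})$), and then invoke Theorem \ref{ega} after verifying \eqref{rf} and identifying the limit rate function. You supply more detail than the paper on the identification of \eqref{srf} with $\inf\{I(x):f(x)=y\}$ and on the attainment of the infima, and you verify \eqref{rf} by a subsequence-extraction argument where the paper instead passes through the enlargement $F+B[0,\delta]$ and lets $\delta\to0$; these are minor variants of the same compactness idea.
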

 \begin{proof}
Since the functions $f_{m},\,m\in\mathbb{N}$, are continuous, the contraction principle entails that $(f_{m}(X_{\varepsilon}))_{\varepsilon>0}$
satisfies a large deviation principle with action functional
\begin{equation*}
S_{m}(y):=\inf\{I(x): y=f_{m}(x)\}.
\end{equation*}
Moreover, by \eqref{di}, $f$ is continuous on any sublevel set $\Phi_{I}(r):=\{x\in M_{1}: I(x)\leq r\}, r\geq0$. Hence, $S$ is an action functional with sublevel sets $f(\Phi_{I}(r))$. In view of Theorem \ref{ega}, it suffices to check \eqref{rf} and identify the action functional.

Fix $F\subseteq M_{2}$ closed and $\delta>0$, and also suppose
\begin{equation*}
c:=\liminf_{m\rightarrow\infty}\inf_{y\in F}S_{m}(y)<\infty.
\end{equation*}
Then we can choose a sequence $(x_{m})_{m\in\mathbb{N}}\subseteq M_{1}$ and $r>0$, such that $f_{m}(x_{m})\in F$ and $I(x_{m})=\inf_{y\in F}S_{m}(y)\leq r$. From \eqref{ecp} we have $f(x_{m})\in F+B[0,\delta]$ for $m=m(\delta)$ sufficiently large.
Thus,
\begin{equation*}
\inf_{y\in F+B(0,\delta)}S(y)\leq S(f(x_{m}))\leq I(x_{m})=\inf_{y\in F}S_{m}(y).
\end{equation*}
Taking $\delta\rightarrow0$ and $m\rightarrow\infty$, we infer
\begin{equation*}
\inf_{y\in F}S(y)\leq\liminf_{m\rightarrow\infty}\inf_{y\in F}S_{m}(y)=c.
\end{equation*}
Obviously, this inequality is trivially satisfied if $c=\infty$. In particular, \eqref{rf} holds. In order to identify the action functional, we use the preceding inequality for $F:=B[y,\delta]$ and let $\delta\rightarrow0$.
\end{proof}

\section{Brownian case}
Let $B_{t}$, $t\in[0,1]$ denotes a standard Brownian motion in $\mathbb{R}$. The logarithmic moment generating function of $B_{1}$ is
\begin{equation*}
\Lambda(\xi):=\log\mathbb{E}e^{\xi B_{1}}=\frac{1}{2}\xi^{2},
\end{equation*}
and the Legendre transform \cite{FK06} of $\Lambda$ is
\begin{equation*}
\Lambda^{*}(p):=\sup_{\xi\in\mathbb{R}}\{\xi p-\frac{1}{2}\xi^{2}\}=\sup_{\xi\in\mathbb{R}}\{-\frac{1}{2}(\xi-p)^{2}+\frac{1}{2}p^{2}\}=\frac{1}{2}p^{2}.
\end{equation*}

\begin{definition}
Let $\phi\in \mathcal{C}[0,1]$. The functional $S: \mathcal{C}[0,1]\rightarrow[0,\infty]$,
\begin{eqnarray}\label{rate}
S(\phi)=
\left\{\begin{array}{l}
   \frac{1}{2}\int_{0}^{1}|\phi'(t)|^{2}dt,\,\,\,\,\,\phi\in AC[0,1], \\
    \infty,\,\,\,\,\,\,\,\,\,\,\,\,\,\,\,\,\,\,\,\,\,\,\,\,\,\,\,\,\,\,\,\,\,\,\,\,\text{otherwise}
   \end{array}\right.
\end{eqnarray}
is the action functional of the Brownian motion $(B_{t})_{t\in[0,1]}$.
\end{definition}

\begin{lemma}\label{Bro}
The scaled Brownian motion $B_{t}^{\varepsilon}:=\varepsilon B_{\frac{t}{\varepsilon}}$ satisfies a large deviation principle in $(\mathcal{C}[0,1], \|\cdot\|_{\infty})$ as $\varepsilon\rightarrow0$
with action functional in \eqref{rate}, i.e.
\begin{align*}
  \liminf_{\varepsilon\rightarrow0}\varepsilon\log\mathbb{P}(\varepsilon B(\frac{.}{\varepsilon})\in U)&\geq-\inf_{\phi\in U}S(\phi), \\
  \limsup_{\varepsilon\rightarrow0}\varepsilon\log\mathbb{P}(\varepsilon B(\frac{.}{\varepsilon})\in F)&\leq-\inf_{\phi\in F}S(\phi),
\end{align*}
for any open set $U\subset \mathcal{C}[0,1]$ and closed set $F\subset\mathcal{ C}[0,1]$
\end{lemma}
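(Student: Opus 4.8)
The plan is to reduce the statement to the classical Schilder theorem by exploiting Brownian scaling, and then to prove the latter via the finite-dimensional large deviation principle together with an exponential tightness estimate. First I would observe that for each fixed $\varepsilon>0$ the process $t\mapsto\varepsilon B_{t/\varepsilon}$ is a centered Gaussian process with covariance $\mathbb{E}[\varepsilon B_{s/\varepsilon}\,\varepsilon B_{t/\varepsilon}]=\varepsilon^{2}\,(s/\varepsilon\wedge t/\varepsilon)=\varepsilon\,(s\wedge t)$; hence $B^{\varepsilon}\stackrel{d}{=}\sqrt{\varepsilon}\,B$ as random elements of $\mathcal{C}[0,1]$, so it suffices to prove that $\sqrt{\varepsilon}\,B$ obeys a large deviation principle with action functional \eqref{rate}.

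Next I would establish the large deviation principle for the finite-dimensional projections. Fix a partition $0=t_{0}<t_{1}<\cdots<t_{n}=1$. The increments $\sqrt{\varepsilon}\,(B_{t_{i}}-B_{t_{i-1}})$ are independent centered Gaussian variables, and by Cram\'er's theorem applied along $\varepsilon=1/k$ (whose one-dimensional Legendre computation is precisely the one performed just above, giving $\Lambda^{*}(p)=\tfrac12 p^{2}$, rescaled by the increment length) and continuity of the Gaussian family in $\varepsilon$, the vector $(\sqrt{\varepsilon}B_{t_{1}},\dots,\sqrt{\varepsilon}B_{t_{n}})$ satisfies a large deviation principle in $\mathbb{R}^{n}$ with action functional $\sum_{i=1}^{n}\frac{|x_{i}-x_{i-1}|^{2}}{2(t_{i}-t_{i-1})}$, where $x_{0}:=0$. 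The Dawson--G\"artner theorem then yields a large deviation principle for $\sqrt{\varepsilon}B$ in $\mathbb{R}^{[0,1]}$ equipped with the topology of pointwise convergence, with action functional $\widetilde{S}(\phi)=\sup\sum_{i}\frac{|\phi(t_{i})-\phi(t_{i-1})|^{2}}{2(t_{i}-t_{i-1})}$, the supremum being taken over all finite partitions of $[0,1]$.

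To upgrade this to the uniform topology on $\mathcal{C}[0,1]$ I would prove exponential tightness by controlling the modulus of continuity on the exponential scale, namely
\begin{equation*}
\lim_{m\to\infty}\limsup_{\varepsilon\to0}\varepsilon\log\mathbb{P}\Bigl(\sup_{|s-t|\le 1/m}\bigl|\sqrt{\varepsilon}B_{s}-\sqrt{\varepsilon}B_{t}\bigr|>\delta\Bigr)=-\infty\qquad\text{for every }\delta>0,
\end{equation*}
which follows from the reflection principle and the Gaussian tail estimate applied on each of the $m$ subintervals of length $1/m$, together with a union bound; combined with the analogous control of $\sup_{t}|\sqrt{\varepsilon}B_{t}|$, Arzel\`a--Ascoli provides exponentially good compact exhaustions of $\mathcal{C}[0,1]$. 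One then transfers the pointwise large deviation principle to the uniform one by the standard inverse-contraction argument (alternatively, by viewing the polygonal interpolations of $\sqrt{\varepsilon}B$ as an exponentially good approximation and invoking Theorem \ref{ega}).

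Finally I would identify $\widetilde{S}$ with $S$ from \eqref{rate}: for $\phi\in AC[0,1]$, Cauchy--Schwarz gives $\frac{|\phi(t_{i})-\phi(t_{i-1})|^{2}}{t_{i}-t_{i-1}}\le\int_{t_{i-1}}^{t_{i}}|\phi'(t)|^{2}\,dt$, so $\widetilde{S}(\phi)\le\tfrac12\int_{0}^{1}|\phi'(t)|^{2}\,dt$, while refining the partition and using the $L^{2}$-convergence of the piecewise-constant conditional expectations of $\phi'$ yields the reverse inequality; and if $\phi\notin AC[0,1]$ one checks, via weak $L^{2}$-compactness of the difference quotients, that $\widetilde{S}(\phi)=\infty$. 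I expect the exponential tightness estimate---the exponential-scale control of the Brownian modulus of continuity---to be the main obstacle, the remaining steps being routine applications of Cram\'er's theorem, the Dawson--G\"artner theorem, and Cauchy--Schwarz.
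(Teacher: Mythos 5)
Your proposal is correct, but it takes a genuinely different route from the paper. The paper shares your first step (the scaling identity $\varepsilon B_{\cdot/\varepsilon}\stackrel{d}{=}\sqrt{\varepsilon}B$ in $\mathcal{C}[0,1]$), but from there it simply \emph{invokes} Schilder's theorem in its Freidlin--Wentzell local form: the lower bound comes from the estimate $\mathbb{P}(\|\sqrt{\varepsilon}B-\phi_{0}\|_{\infty}<\delta_{0})\geq\exp[-\tfrac{1}{\varepsilon}(S(\phi_{0})+\gamma)]$ applied to a ball inside $U$, and the upper bound from the estimate $\mathbb{P}(d(\sqrt{\varepsilon}B,\Phi(r))>\delta_{r})\leq\exp(-\tfrac{r-\gamma}{\varepsilon})$, after verifying that the sublevel sets $\Phi(r)$ are compact via the Cauchy--Schwarz H\"older-$\tfrac12$ bound and Ascoli's theorem. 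You instead \emph{prove} Schilder's theorem from scratch: Cram\'er/Gaussian computation for the finite-dimensional marginals, Dawson--G\"artner to pass to the projective limit with rate $\widetilde{S}(\phi)=\sup\sum_{i}|\phi(t_{i})-\phi(t_{i-1})|^{2}/(2(t_{i}-t_{i-1}))$, exponential tightness via the modulus-of-continuity estimate and the reflection principle, the inverse contraction principle to upgrade to the uniform topology, and finally the identification $\widetilde{S}=S$ via Cauchy--Schwarz in one direction and weak $L^{2}$-compactness of difference quotients in the other. All of these steps are sound (the $L^{2}$-compactness argument also shows that finiteness of $\widetilde{S}$ forces absolute continuity, which you should state explicitly rather than only as a check for $\phi\notin AC[0,1]$). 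What your approach buys is self-containedness and a template that generalizes directly to the L\'evy case treated later in the paper (the discretization-plus-exponential-equivalence scheme of Lemma \ref{iBro} is structurally the same as your Dawson--G\"artner route); what the paper's approach buys is brevity, at the cost of leaning on the cited result [S96], which is essentially the statement being proved. The one piece of content the two arguments genuinely share beyond scaling is the compactness of $\Phi(r)$, which you need implicitly for the goodness of the rate function and the paper needs explicitly for its upper bound.
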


\begin{proof}
In order to proof that $(B^{\varepsilon}_{t})_{t\in[0,1]}$ satisfies a large deviation
principle,  by the scaling property
\begin{equation*}
\varepsilon B(\frac{t}{\varepsilon})\stackrel{d}{=}\sqrt{\varepsilon}B(t),\,\,\,t\in[0,1],
\end{equation*}
where $``\stackrel{d}{=}"$ denotes equivalence (coincidence) in distribution, we may replace $B_{t}^{\varepsilon}$ by $\sqrt{\varepsilon}B_{t}$,  i.e.,
\begin{align*}
  \liminf_{\varepsilon\rightarrow0}\varepsilon\log\mathbb{P}(\sqrt{\varepsilon}B\in U)&\geq-\inf_{\phi\in U}S(\phi), \\
  \limsup_{\varepsilon\rightarrow0}\varepsilon\log\mathbb{P}(\sqrt{\varepsilon}B\in F)&\leq-\inf_{\phi\in F}S(\phi).
\end{align*}

For every $\phi_{0}\in U$, there is some $\delta_{0}>0$ such that
\begin{equation*}
\{\phi\in \mathcal{C}[0,1]: \|\phi-\phi_{0}\|_{\infty}<\delta_{0}\}\subset U.
\end{equation*}
Based on Schilder's theorem in \cite{S96},
\begin{equation*}
\mathbb{P}(\sqrt{\varepsilon}B\in U)\geq\mathbb{P}(\|\sqrt{\varepsilon}B-\phi_{0}\|_{\infty}<\delta_{0})\geq\exp[-\frac{1}{\varepsilon}(S(\phi_{0})+\gamma)],\,\,\text{for}\,\gamma>0.
\end{equation*}
Then
\begin{equation*}
\liminf_{\varepsilon\rightarrow0}\varepsilon\log\mathbb{P}(\sqrt{\varepsilon}B\in U)\geq S(\phi_{0}).
\end{equation*}
Since $\phi_{0}\in U$ is arbitrary,
\begin{equation*}
 \liminf_{\varepsilon\rightarrow0}\varepsilon\log\mathbb{P}(\sqrt{\varepsilon}B\in U)\geq-\inf_{\phi\in U}S(\phi).
\end{equation*}

Denote by
\begin{equation*}
\Phi(r):=\{f\in\mathcal{C}[0,1]: I(f)\leq r\},\,\,r\geq0
\end{equation*}
the sub-level sets of the action functional $S$ in \eqref{rate}. From Lemma 12.8 in \cite{SP12}, the action functional $S$ is lower semicontinuous. Then sub-level sets are closed. For each $r>0$, $0\leq s<t\leq1$ and $\phi\in\Phi(r)$, by Cauchy-Schwarz inequality,
\begin{align*}
  |\phi(t)-\phi(s)|&=|\int_{s}^{t}\phi'(u)du|\leq(\int_{s}^{t}|\phi'(u)|^{2}du)^{\frac{1}{2}}\sqrt{t-s}\\
                   &\leq\sqrt{2S(\phi)}\sqrt{t-s}\leq\sqrt{2r}\sqrt{t-s}.
\end{align*}
This implies that the family $\Phi(r)$ is equibounded and equicontinuous. Using Ascoli's theorem, $\Phi(r)$ is compact.
By the definition of the sub-level set $\Phi(r)$, we have $\Phi(r)\cap F=\emptyset$ for all $r<\inf_{\phi\in F}S(\phi)$.
So
\begin{equation*}
d(\Phi(r),F)=\inf_{\phi\in\Phi(r) }d(\phi,F)=:\delta_{r}>0.
\end{equation*}
Applying Schilder's theorem, we obtain that
\begin{equation*}
\mathbb{P}(\sqrt{\varepsilon}B\in F)\leq\mathbb{P}(d(\sqrt{\varepsilon}B,\Phi(r)>\delta_{r})\leq\exp(-\frac{r-\gamma}{\varepsilon}),\,\,\text{for}\,\gamma>0.
\end{equation*}
Hence
\begin{equation*}
\limsup_{\varepsilon\rightarrow0}\varepsilon\log\mathbb{P}(\sqrt{\varepsilon}B\in F)\leq -r.
\end{equation*}
Since $r<\inf_{\phi\in F}S(\phi)$ is arbitrary, we get
\begin{equation*}
\limsup_{\varepsilon\rightarrow0}\varepsilon\log\mathbb{P}(\sqrt{\varepsilon}B\in F)\leq-\inf_{\phi\in F}S(\phi).
\end{equation*}
\end{proof}

\begin{theorem}\label{hai}
Let $b, \sigma: \mathbb{R}\rightarrow\mathbb{R}$ be bounded, globally Lipschitz continuous functions such that $\inf_{x\in\mathbb{R}}\sigma(x)>0$, i.e., there exists $K>0$ such that
\begin{equation*}
|b(x)-b(y)|+|\sigma(x)-\sigma(y)|\leq K|x-y|,\,\,\,\,\text{for all}\,\,\,x, y\in\mathbb{R}.
\end{equation*}
Assume that $(X_{t}^{\varepsilon})_{t\in[0,1]}$ is a solution of the stochastic differential equation driven by Brownian motion, i.e. SDE of the form
\begin{equation*}
dX_{t}^{\varepsilon}=b(X_{t}^{\varepsilon})dt+\sqrt{\varepsilon}\sigma(X_{t}^{\varepsilon})dB_{t},~~~~X_{0}^{\varepsilon}=0.
\end{equation*}
Then $(X^{\varepsilon})_{\varepsilon>0}$ satisfies a large deviation principle in $(\mathcal{C}[0,1], \|\cdot\|_{\infty})$ with action functional
\begin{eqnarray}\label{aya}
S(\phi)=
\left\{\begin{array}{l}
\mathrm{}  \frac{1}{2}\int_{0}^{1}|\frac{\phi'(t)-b(\phi(t))}{\sigma(\phi(t))}|^{2}dt,\,\,\,\,\,\,\,\,\,\,\phi\in AC[0,1], \phi(0)=0,\\
    \infty,\,\,\,\,\,\,\,\,\,\,\,\,\,\,\,\,\,\,\,\,\,\,\,\,\,\,\,\,\,\,\,\,\,\,\,\,\,\,\,\,\,\,\,\,\,\,\,\,\,\,\,\,\,\,\,\,\,\,\,\,\,\,\,\,\,\,\,\,\,\,\text{otherwise}.
   \end{array}\right.
\end{eqnarray}
\end{theorem}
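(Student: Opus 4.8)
The plan is to deduce the large deviation principle for $(X^{\varepsilon})_{\varepsilon>0}$ from the extended contraction principle (Theorem \ref{ecp}), taking as input the large deviation principle for the scaled Brownian motion established in Lemma \ref{Bro} (equivalently, for $\sqrt{\varepsilon}B$ by the scaling property used there), with action functional $I$ as in \eqref{rate}. The continuous approximating maps and the limiting measurable map are built from the skeleton (control) equation. For $\psi\in AC[0,1]$ let $f(\psi)$ be the unique $y\in AC[0,1]$ solving
\begin{equation*}
y(t)=\int_{0}^{t}b(y(s))\,ds+\int_{0}^{t}\sigma(y(s))\,\psi'(s)\,ds,\qquad t\in[0,1],
\end{equation*}
existence, uniqueness and continuous dependence on $\psi'\in L^{1}$ following from the global Lipschitz and boundedness hypotheses on $b,\sigma$ via Gronwall's inequality; by the same continuous-dependence estimate $f$ is continuous on each compact sublevel set $\Phi_{I}(r)=\{\psi:I(\psi)\le r\}$, so it extends to a Borel measurable map on $\mathcal{C}[0,1]$ (its values outside $\{I<\infty\}$ being immaterial below). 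For $m\in\mathbb{N}$ let $\psi^{(m)}$ be the piecewise linear interpolation of $\psi$ over the mesh $\{k/m:0\le k\le m\}$ and put $f_{m}(\psi):=f(\psi^{(m)})$. Since $\psi\mapsto\psi^{(m)}$ is continuous on $(\mathcal{C}[0,1],\|\cdot\|_{\infty})$ and $f$ restricted to the ($m$-dimensional) space of polygonal functions on that mesh is continuous, each $f_{m}:\mathcal{C}[0,1]\to\mathcal{C}[0,1]$ is continuous.

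I would then verify the uniform approximation condition \eqref{di}. On $\Phi_{I}(r)$, Cauchy--Schwarz gives $\int_{0}^{1}|\psi'|\le\sqrt{2r}$ and $|\psi(t)-\psi(s)|\le\sqrt{2r}\,|t-s|^{1/2}$, exactly as in the proof of Lemma \ref{Bro}; in particular $\|\psi-\psi^{(m)}\|_{\infty}\le\sqrt{2r/m}$ for all $\psi\in\Phi_{I}(r)$, and the interpolants $\psi^{(m)}$ have total variation at most $\sqrt{2r}$. Writing out the integral equations for $y:=f(\psi)$ and $y_{m}:=f_{m}(\psi)=f(\psi^{(m)})$, splitting the difference of the control-integral terms as $\int_{0}^{\cdot}[\sigma(y)-\sigma(y_{m})]\,d\psi^{(m)}+\int_{0}^{\cdot}\sigma(y)\,d(\psi-\psi^{(m)})$, bounding the first summand by the Lipschitz constant of $\sigma$, the second by an integration by parts together with $\|\psi-\psi^{(m)}\|_{\infty}\le\sqrt{2r/m}$ and boundedness of $\sigma,\sigma'$, and finally invoking Gronwall's inequality, one obtains $\sup_{\psi\in\Phi_{I}(r)}\|f_{m}(\psi)-f(\psi)\|_{\infty}\to0$ as $m\to\infty$, which is \eqref{di}.

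The substantive point is to show that $(f_{m}(\sqrt{\varepsilon}B))_{\varepsilon>0,m\in\mathbb{N}}$ is an exponentially good approximation of $X^{\varepsilon}$, i.e.
\begin{equation*}
\lim_{m\to\infty}\limsup_{\varepsilon\to0}\varepsilon\log\mathbb{P}\big(\|f_{m}(\sqrt{\varepsilon}B)-X^{\varepsilon}\|_{\infty}>\delta\big)=-\infty\qquad\text{for every }\delta>0.
\end{equation*}
Here $f_{m}(\sqrt{\varepsilon}B)$ is the Wong--Zakai (polygonal) approximation of the It\^o equation for $X^{\varepsilon}$. One writes $X^{\varepsilon}-f_{m}(\sqrt{\varepsilon}B)$ through the two integral equations, controls the drift contributions by Gronwall's inequality, absorbs the It\^o--Stratonovich correction term (which carries a prefactor $\varepsilon$ and is therefore exponentially negligible as $\varepsilon\to0$), and estimates the remaining stochastic terms by Bernstein-type exponential martingale inequalities using boundedness of $\sigma$ and $\sigma'$; letting first $\varepsilon\to0$ and then $m\to\infty$ drives the bound to $-\infty$. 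I expect this exponential Wong--Zakai estimate to be the main obstacle, since it is the one place where genuine stochastic analysis, rather than the soft large-deviation machinery of Section 2, is needed.

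Granting the two verifications, Theorem \ref{ecp} yields that $(X^{\varepsilon})_{\varepsilon>0}$ satisfies a large deviation principle in $(\mathcal{C}[0,1],\|\cdot\|_{\infty})$ with action functional $S(\phi)=\inf\{I(\psi):\phi=f(\psi)\}$, whose sublevel sets $f(\Phi_{I}(r))$ are compact. It remains to identify $S$ with \eqref{aya}. Every $f(\psi)$ lies in $AC[0,1]$ and vanishes at $0$, so if $\phi\notin AC[0,1]$ or $\phi(0)\ne0$ the infimum is over the empty set and $S(\phi)=\infty$. If $\phi\in AC[0,1]$ with $\phi(0)=0$, then since $\inf_{x\in\mathbb{R}}\sigma(x)>0$ the equation $\phi=f(\psi)$ forces $\phi'=b(\phi)+\sigma(\phi)\psi'$ a.e., hence $\psi$ is uniquely determined by $\psi'(t)=(\phi'(t)-b(\phi(t)))/\sigma(\phi(t))$, and therefore
\begin{equation*}
S(\phi)=I(\psi)=\frac{1}{2}\int_{0}^{1}|\psi'(t)|^{2}\,dt=\frac{1}{2}\int_{0}^{1}\Big|\frac{\phi'(t)-b(\phi(t))}{\sigma(\phi(t))}\Big|^{2}\,dt,
\end{equation*}
which is precisely \eqref{aya}. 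This last step is the Legendre-transform/skeleton-equation bookkeeping alluded to in the introduction.
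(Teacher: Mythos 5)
Your overall architecture (LDP for $\sqrt{\varepsilon}B$ from Lemma \ref{Bro}, continuous approximating maps converging uniformly on sublevel sets, the extended contraction principle Theorem \ref{ecp}, then identification of the rate function) matches the paper's, and your verification of \eqref{di} and the final identification of $S$ with \eqref{aya} are sound. The divergence --- and the gap --- is in your choice of approximating maps $f_{m}$. The paper takes $F^{m}(g)$ to be the solution map with coefficients frozen at the mesh points $t_{k}^{m}=k/m$ but driven by the \emph{unmodified} path $g$; then $F^{m}(\sqrt{\varepsilon}B)=X^{\varepsilon,m}$ is itself the solution of an It\^o SDE (the Euler scheme \eqref{eq}), and its exponential closeness to $X^{\varepsilon}$ is obtained by a comparatively soft argument: a stopping time controlling $|X_{t}^{\varepsilon,m}-X^{\varepsilon,m}_{\lfloor mt\rfloor/m}|$, Gronwall, and Etemadi/Markov exponential bounds, with no smoothness of $\sigma$ beyond Lipschitz continuity. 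You instead polygonalize the driving path and solve the full control ODE, so $f_{m}(\sqrt{\varepsilon}B)$ is a Wong--Zakai approximation, and the required statement
\begin{equation*}
\lim_{m\rightarrow\infty}\limsup_{\varepsilon\rightarrow0}\varepsilon\log\mathbb{P}\bigl(\|f_{m}(\sqrt{\varepsilon}B)-X^{\varepsilon}\|_{\infty}>\delta\bigr)=-\infty
\end{equation*}
is an exponential Wong--Zakai theorem. That statement is true but is a substantial result in its own right (Azencott-type), and you do not prove it --- you sketch it and explicitly defer it as ``the main obstacle.'' As written, this is the missing step of the proof, not a routine verification.

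Moreover, your sketch of that step invokes the It\^o--Stratonovich correction $\tfrac{\varepsilon}{2}\sigma\sigma'(X)\,dt$ and ``boundedness of $\sigma,\sigma'$,'' but the theorem only assumes $\sigma$ bounded and globally Lipschitz: $\sigma$ need not be differentiable, and the standard exponential Wong--Zakai estimates require at least $C^{1}$ (typically $C^{2}$) coefficients. The paper's frozen-coefficient approximation avoids this entirely, because both $X^{\varepsilon,m}$ and $X^{\varepsilon}$ are It\^o equations with the same driver and no Stratonovich correction ever appears. To repair your argument you would either need to strengthen the hypotheses on $\sigma$ and import a full proof of the exponential Wong--Zakai estimate, or switch to the paper's (easier) choice of approximating maps.
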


\begin{proof}
The key point of proof is that the family of solutions $(X_{t}^{\varepsilon,m})_{t\in[0,1]}$ given by the stochastic differential equation
\begin{equation}\label{eq}
dX_{t}^{\varepsilon,m}=b(X^{\varepsilon,m}_{\frac{\lfloor mt\rfloor}{m}})dt+\sqrt{\varepsilon}\sigma(X^{\varepsilon,m}_{\frac{\lfloor mt\rfloor}{m}})dB_{t}
\end{equation}
is an exponentially good approximation of $(X_{t}^{\varepsilon})_{t\in[0,1]}$. Then the stochastic integral can be evaluated pathwise.

Let $\delta, \rho, \varepsilon>0$. For $m\in\mathbb{N}$, define $F_{m}: \mathcal{C}[0,1]\longrightarrow \mathcal{C}[0,1]$ via $\phi=F^{m}(g)$, where
\begin{equation*}
\phi(t)=\phi(t_{k}^{m})+b(\phi(t_{k}^{m}))(t-t_{k}^{m})+\sigma(\phi(t_{k}^{m}))(g(t)-g(t_{k}^{m})),
\end{equation*}
for $t\in(t_{k}^{m},t_{k+1}^{m}]$, $t_{k}^{m}:=k/m$, $k=0,....,m-1$, and $\phi(0)=0$, such that $F^{m}(\sqrt{\varepsilon}B)=X^{\varepsilon,m}$. Define a $\mathcal{F}_{t}^{\varepsilon}$-stopping time
by
\begin{equation*}
\tau:=\tau(\rho):=\inf\{t\geq0: |X_{t}^{\varepsilon,m}-X^{\varepsilon,m}_{\frac{\lfloor mt\rfloor}{m}}|>\rho\}\wedge1,
\end{equation*}
and set
\begin{equation*}
b_{t}:=b(X^{\varepsilon,m}_{\frac{\lfloor mt\rfloor}{m}})-b(X_{t}^{\varepsilon}),\,\,\,\,\sigma_{t}:=\sigma(X^{\varepsilon,m}_{\frac{\lfloor mt\rfloor}{m}})-\sigma(X_{t}^{\varepsilon}),
\end{equation*}
where $\mathcal{F}_{t}^{\varepsilon}:=\sigma\{B_{s}^{\varepsilon}:s\leq t\}$  denotes the canonical filtration.
By the global Lipschitz continuity,
\begin{equation*}
|b_{t}|+|\sigma_{t}|\leq L|X^{\varepsilon,m}_{\frac{\lfloor mt\rfloor}{m}}-X_{t}^{\varepsilon}|\leq\sqrt{2}K(\rho^{2}+|X_{t}^{\varepsilon,m}-X_{t}^{\varepsilon}|^{2})^{\frac{1}{2}},\,\,\,\text{for any}\,\,\,t\in[0,\tau].
\end{equation*}
A calculation shows
\begin{equation*}
\varepsilon\log\mathbb{P}(\sup_{t\in[0,\tau]}|X_{t}^{\varepsilon,m}-X_{t}^{\varepsilon}|>\delta)\leq C+\log(\frac{\rho^{2}}{\rho^{2}+\delta^{2}}),
\end{equation*}
where $C>0$ is a constant that does not depend on $m,\varepsilon,\rho$. So,
\begin{equation*}
\lim_{\rho\rightarrow0}\sup_{m\geq1}\limsup_{\varepsilon\rightarrow0}\varepsilon\log\mathbb{P}(\sup_{t\in[0,\tau]}|X_{t}^{\varepsilon,m}-X_{t}^{\varepsilon}|>\delta)=-\infty,\,\,\,\,\text{for all}\,\,\,\,\delta>0.
\end{equation*}
Since $b$ and $\sigma$ are bounded, we find
\begin{equation*}
|X_{\frac{k}{m}+s}^{\varepsilon,m}-X_{\frac{k}{m}}^{\varepsilon,m}|\leq\tilde{C}(\frac{1}{m}+\sqrt{\varepsilon}\max_{0\leq k\leq m-1}\sup_{0\leq s\leq\frac{1}{m}}|B_{\frac{k}{m}+s}-B_{\frac{k}{m}}|),\,\,\,\,\text{for}\,\,\,\,0\leq s\leq\frac{1}{m},
\end{equation*}
where $\tilde{C}:=\max\{\|b\|_{\infty}, \|\sigma\|_{\infty}\}$. By the stationarity of the increments, we have
\begin{equation*}
\mathbb{P}(\tau<1)=\mathbb{P}(\bigcup_{k=0}^{m-1}\{\sup_{0\leq s\leq\frac{1}{m}}|X_{\frac{k}{m}+s}^{\varepsilon,m}-X_{\frac{k}{m}}^{\varepsilon,m}|>\rho\})\leq m\mathbb{P}(\sup_{0\leq s\leq\frac{1}{m}}|B_{s}|\geq\frac{\rho-\tilde{C}/m}{2\sqrt{\varepsilon}\tilde{C}}),
\end{equation*}
for all $m>\tilde{C}/\rho$. By Etemadi's inequality \cite{Et85} and Markov's inequality,
\begin{equation*}
\mathbb{P}(\sup_{0\leq s\leq\frac{1}{m}}|B_{s}|\geq\frac{\rho-\tilde{C}/m}{2\sqrt{\varepsilon}\tilde{C}})\leq6\exp(-\frac{\rho-\tilde{C}/m}{6\sqrt{\varepsilon}\tilde{C}}+\hat{C})
\end{equation*}
with a constant $\hat{C}>0$. Then
\begin{equation*}
\lim_{m\rightarrow\infty}\limsup_{\varepsilon\rightarrow0}\varepsilon\log\mathbb{P}(\tau<1)=-\infty\,\,\,\,\text{for all}\,\,\,\rho>0.
\end{equation*}
From
\begin{equation*}
\{\|X^{\varepsilon,m}-X^{\varepsilon}\|_{\infty}>\delta\}\subseteq\{\tau<1\}\cup\{\sup_{t\in[0,\tau]}|X_{t}^{\varepsilon,m}-X_{t}^{\varepsilon}|>\delta\},
\end{equation*}
the family of solutions $(X^{\varepsilon,m})_{\varepsilon>0,m\in\mathbb{N}}$ is indeed an exponentially good approximation of $(X^{\varepsilon})_{\varepsilon>0}$.

In Lemma \ref{Bro} we have shown that $B_{t}^{\varepsilon}$ obeys a large deviation principle with action functional $S$ as in \eqref{rate}, and $\sqrt{\varepsilon}B_{t}$ satisfies the same large deviation principle as $B_{t}^{\varepsilon}$. The task is now to find a function $F: \mathcal{C}[0,1]\rightarrow \mathcal{C}[0,1]$ such that the assumptions of Theorem \ref{ecp} are satisfied, and the continuous mappings $F^{m}$ converges uniformly on the compact sublevel set of $S$ in \eqref{rate} to $F$.

For absolutely continuous functions $g\in \mathcal{C}[0,1]$ and $x\in\mathbb{R}$, by $b$ and $\sigma$ are globally Lipschitz continuous, there exists
a unique solution $\phi=F(g)$ of the integral equation
\begin{equation*}
f(t)=\int_{0}^{t}b(f(s))ds+\int_{0}^{t}\sigma(f(s)g'(s))ds,\,\,\,\,t\in[0,1].
\end{equation*}
Fix $g\in\Phi(r):=\{\phi\in \mathcal{C}[0,1]: S(\phi)\leq r\}$. Using $b, \sigma$ are bounded and the Cauchy-Schwarz inequality,
\begin{equation*}
\sup_{0\leq t\leq1}|F^{m}(g)(t)-F^{m}(g)(\frac{\lfloor tm\rfloor}{m})|\leq\frac{\|b\|_{\infty}}{m}+\|\sigma\|_{\infty}\sqrt{\frac{1}{m}}\sqrt{\int_{0}^{1}g'(s)^{2}ds}=:\delta_{m}\stackrel{m\rightarrow0}{\longrightarrow}0.
\end{equation*}
Similarly,
\begin{align*}
  d(t):=&|F^{m}(g)(t)-F(g)(t)|\\
        &\leq K\int_{0}^{t}(1+|g'(s)|)|F^{m}(g)(\frac{\lfloor ms\rfloor}{m}-F(g)(s)|ds\\
        &\leq K(1+\sqrt{2r})\delta_{m}+L\int_{0}^{t}(1+|g'(s)|)d(s)ds.
\end{align*}
From Gronwall's lemma,
\begin{align*}
  d(t)&\leq K(1+\sqrt{2r})\delta_{m}[1+K\int_{0}^{t}(1+|g'(s)|)\exp(K\int_{s}^{t}(1+|g'(u)|)du)ds]\\
      &\leq K(1+\sqrt{2r})\delta_{m}(1+K(1+\sqrt{2r})e^{K(1+\sqrt{2r})}).
\end{align*}
Because the constants $K,1+\sqrt{2r},\delta_{m}$ do not depend on $t$ and $g$,
\begin{equation*}
\sup_{g\in\Phi(r)}||F^{m}(g)-F(g)||_{\infty}\leq K(1+\sqrt{2r})\delta_{m}(1+K(1+\sqrt{2r})e^{K(1+\sqrt{2r})})\stackrel{m\rightarrow0}{\longrightarrow}0.
\end{equation*}
Apply Theorem \ref{ecp}, $(X^{\varepsilon})_{\varepsilon>0}$ satisfies a large deviation principle with good rate function
\begin{eqnarray*}
S(\phi)=
\left\{\begin{array}{l}
\mathrm{}   \frac{1}{2}\int_{0}^{1}|g'(t)|^{2}dt,\,\,\,\,\,\,\,\,\,\,\,\,\,\,\,\,\,\,\,\phi\in AC[0,1], \phi(0)=0,\\
    \infty,\,\,\,\,\,\,\,\,\,\,\,\,\,\,\,\,\,\,\,\,\,\,\,\,\,\,\,\,\,\,\,\,\,\,\,\,\,\,\,\,\,\,\,\,\,\,\,\,\,\,\,\,\,\,\,\,\,\,\,\,\,\text{otherwise},
   \end{array}\right.
\end{eqnarray*}
where the infimum is taken over all functions $g\in AC[0,1]$, $g(0)=0$, such that
\begin{equation*}
\phi(t)=F(g)(t)=\int_{0}^{t}b(\phi(s))ds+\int_{0}^{t}\sigma(\phi(s))g'(s)ds,
\end{equation*}
that is,
\begin{equation*}
g'(t)=\frac{\phi'(t)-b(\phi(t))}{\sigma(\phi(t))}.
\end{equation*}
\end{proof}

\begin{remark}
For the special case $\sigma=1$, the solution  $(X^{\varepsilon})_{\varepsilon>0}$ of the stochastic differential equation
\begin{equation*}
dX_{t}^{\varepsilon}=b(X_{t}^{\varepsilon})dt+\sqrt{\varepsilon}dB_{t},~~~~X_{0}^{\varepsilon}=0,
\end{equation*}
has the action functional
\begin{eqnarray*}
S(\phi)=
\left\{\begin{array}{l}
\mathrm{}  \frac{1}{2}\int_{0}^{1}|\phi'(t)-b(\phi(t))|^{2}dt,\,\,\,\,\,\,\,\,\,\,\phi\in AC[0,1], \phi(0)=0,\\
    \infty,\,\,\,\,\,\,\,\,\,\,\,\,\,\,\,\,\,\,\,\,\,\,\,\,\,\,\,\,\,\,\,\,\,\,\,\,\,\,\,\,\,\,\,\,\,\,\,\,\,\,\,\,\,\,\,\,\,\,\,\,\,\,\,\,\,\,\text{otherwise}.
   \end{array}\right.
\end{eqnarray*}
\end{remark}

\begin{theorem}\label{nhai}
Let $b, \sigma: \mathbb{R}\rightarrow\mathbb{R}$ be bounded, globally Lipschitz continuous functions such that $\inf_{x\in\mathbb{R}}\sigma(x)>0$. There exists $K>0$ such that
\begin{equation*}
|b(x)-b(y)|+|\sigma(x)-\sigma(y)|\leq K|x-y|,\,\,\,\,\text{for all}\,\,\,x, y\in\mathbb{R}.
\end{equation*}
Then the action functional of solution $X_{t}$ for the SDE driven by Brownian motion
 \begin{equation}\label{no}
dX_{t}=b(X_{t})dt+\sigma(X_{t})dB_{t},\,\,X_{0}=0,
\end{equation}
is
 \begin{eqnarray*}
S(\phi):=
\left\{\begin{array}{l}
\mathrm{}  \int_{0}^{1}\frac{1}{2}|\frac{\phi'(t)-b(\phi(t))}{\sigma(\phi(t))}|^{2}dt,\,\,\,\,\,\,\,\,\,\,\,\,\,\,\phi\in AC[0,1], \phi(0)=0,\\
    \infty,\,\,\,\,\,\,\,\,\,\,\,\,\,\,\,\,\,\,\,\,\,\,\,\,\,\,\,\,\,\,\,\,\,\,\,\,\,\,\,\,\,\,\,\,\,\,\,\,\,\,\,\,\,\,\,\,\,\,\,\text{otherwise}.
   \end{array}\right.
\end{eqnarray*}
\end{theorem}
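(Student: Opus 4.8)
The plan is to deduce this from Theorem \ref{hai}. Consistently with the convention used for the scaled processes throughout the paper, the action functional attached to the solution of \eqref{no} is the one governing the family $(\tilde X^{\varepsilon})_{\varepsilon>0}$ obtained by driving \eqref{no} with the scaled Brownian motion $B_{t}^{\varepsilon}:=\varepsilon B_{t/\varepsilon}$ in place of $B_{t}$, i.e.\ the solutions of $d\tilde X_{t}^{\varepsilon}=b(\tilde X_{t}^{\varepsilon})\,dt+\sigma(\tilde X_{t}^{\varepsilon})\,dB_{t}^{\varepsilon}$, $\tilde X_{0}^{\varepsilon}=0$. First I would record the scaling identity $\varepsilon B_{t/\varepsilon}\stackrel{d}{=}\sqrt{\varepsilon}\,B_{t}$ on $[0,1]$, already exploited in Lemma \ref{Bro}; since $b,\sigma$ are globally Lipschitz, the SDE has a pathwise-unique strong solution depending measurably on the driving path, so this identity lifts to $\tilde X^{\varepsilon}\stackrel{d}{=}X^{\varepsilon}$ in $(\mathcal{C}[0,1],\|\cdot\|_{\infty})$, where $X^{\varepsilon}$ solves $dX_{t}^{\varepsilon}=b(X_{t}^{\varepsilon})\,dt+\sqrt{\varepsilon}\,\sigma(X_{t}^{\varepsilon})\,dB_{t}$, $X_{0}^{\varepsilon}=0$.

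Then I would apply Theorem \ref{hai} directly: its hypotheses are exactly the ones assumed here ($b,\sigma$ bounded and globally Lipschitz with $\inf_{x}\sigma(x)>0$), so $(X^{\varepsilon})_{\varepsilon>0}$ obeys a large deviation principle in $(\mathcal{C}[0,1],\|\cdot\|_{\infty})$ with action functional \eqref{aya}, which is precisely the functional $S$ in the statement. Since a large deviation principle is a property of the laws alone, $(\tilde X^{\varepsilon})_{\varepsilon>0}$ satisfies the same principle, and this proves the theorem.

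If one prefers an argument that does not pass through the distributional identity, the alternative is to rerun the proof of Theorem \ref{hai} with $\sqrt{\varepsilon}B$ replaced by $B^{\varepsilon}=\varepsilon B_{\cdot/\varepsilon}$: Lemma \ref{Bro} supplies the large deviation principle for $B^{\varepsilon}$ with action functional \eqref{rate}; the Euler approximations from \eqref{eq} are an exponentially good approximation of the solution of the SDE \eqref{no} driven by the scaled noise, by the same stopping-time estimate together with Etemadi's inequality; and the solution maps $F^{m}$ converge uniformly to the solution map $F$ of the controlled equation $f'(t)=b(f(t))+\sigma(f(t))g'(t)$ on every compact sublevel set $\Phi(r)$ of \eqref{rate}, via boundedness of $b,\sigma$, Cauchy--Schwarz and Gronwall's lemma exactly as before. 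Theorem \ref{ecp} then produces the large deviation principle with action functional $S(\phi)=\inf\{\tfrac12\int_{0}^{1}|g'(t)|^{2}\,dt:\phi=F(g)\}$, and solving $\phi'=b(\phi)+\sigma(\phi)g'$ for $g'$ gives $g'=(\phi'-b(\phi))/\sigma(\phi)$, hence the stated formula. In either route I expect no real obstacle: the content is already present in Theorem \ref{hai}, and the only point to watch is that every probability estimate there is unchanged when $\sqrt{\varepsilon}B$ and $\varepsilon B_{\cdot/\varepsilon}$ are interchanged, which holds because the two are equal in law on $[0,1]$.
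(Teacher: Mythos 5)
Your argument is correct, but it follows a genuinely different route from the paper's. You interpret ``the action functional of the solution'' as the rate function governing the family obtained by driving \eqref{no} with the scaled noise $B^{\varepsilon}_{t}=\varepsilon B_{t/\varepsilon}$, observe that $B^{\varepsilon}\stackrel{d}{=}\sqrt{\varepsilon}B$ on $[0,1]$, and use pathwise (hence weak) uniqueness of the Lipschitz SDE to transfer this identity in law to the solutions, so that the statement collapses to Theorem \ref{hai}; this is a clean reduction, and your fallback of rerunning the exponentially-good-approximation argument of Theorem \ref{hai} verbatim is equally sound. The paper instead proceeds through the symbol of the solution process: it writes $q(x,\xi)=ib(x)\xi-\tfrac{1}{2}\sigma^{2}(x)\xi^{2}$, forms the Hamiltonian $H(x,\xi)=q(x,-i\xi)=b(x)\xi+\tfrac{1}{2}\sigma^{2}(x)\xi^{2}$, and identifies the action functional as $\int_{0}^{1}L(\phi(t),\phi'(t))\,dt$ with $L(x,\cdot)$ the Legendre transform of $H(x,\cdot)$, computed to be $\tfrac{1}{2}\bigl|\frac{\zeta-b(x)}{\sigma(x)}\bigr|^{2}$. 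The paper's route is really a computation that presupposes the general symbol-to-Lagrangian correspondence (it is the template reused for the L\'evy case in Corollary \ref{tai}), whereas your route actually derives the large deviation principle from results already proved in the paper and makes explicit the scaling convention under which an $\varepsilon$-free SDE ``has'' an action functional. Both arrive at the same formula; yours is the more self-contained justification within the paper's own framework, the paper's is the one that generalizes directly to jump noise.
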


\begin{proof}
 The symbol of the solution $X_{t}$  for \eqref{no} is given by
 \begin{equation*}
q(x,\xi)=ib(x)\xi-\frac{1}{2}\sigma^{2}(x)\xi^{2}.
 \end{equation*}
Set
\begin{equation*}
H(x,\xi):=q(x,-i\xi)=b(x)\xi+\frac{1}{2}\sigma^{2}(x)\xi^{2}
\end{equation*}
then the Legendre transform of $H(x,\xi)$ is
\begin{align*}
L(x,\zeta)&=\sup_{\xi\in\mathbb{R}}[\zeta\xi-H(x,\xi)]\\
          &=\sup_{\xi\in\mathbb{R}}[\zeta\xi-b(x)\xi-\frac{1}{2}\sigma^{2}(x)\xi^{2}]\\
          &=\sup_{\xi\in\mathbb{R}}[-\frac{1}{2}\sigma^{2}(x)\big(\xi^{2}-\frac{2}{\sigma^{2}(x)}(\zeta-b(x))\xi\big)]\\
          &=\sup_{\xi\in\mathbb{R}}[-\frac{1}{2}\sigma^{2}(x)\big(\xi-\frac{\zeta-b(x)}{\sigma^{2}(x)}\big)^{2}+\frac{1}{2}|\frac{\zeta-b(x)}{\sigma^{2}(x)}|^{2}]\\
          &=\frac{1}{2}|\frac{\zeta-b(x)}{\sigma^{2}(x)}|^{2}.
\end{align*}
So the action functional of solution $X_{t}$ for \eqref{no} is
 \begin{eqnarray*}
S(\phi):=
\left\{\begin{array}{l}
\mathrm{}  \int_{0}^{1}L(\phi(t),\phi'(t))dt,\,\,\,\,\,\,\,\,\,\,\,\,\,\,\,\,\phi\in AC[0,1], \phi(0)=0,\\
    \infty,\,\,\,\,\,\,\,\,\,\,\,\,\,\,\,\,\,\,\,\,\,\,\,\,\,\,\,\,\,\,\,\,\,\,\,\,\,\,\,\,\,\,\,\,\,\,\,\,\,\,\,\,\,\,\,\,\,\,\,\text{otherwise},
   \end{array}\right.
\end{eqnarray*}
where
\begin{equation*}
L(\phi(t),\phi'(t))=\frac{1}{2}|\frac{\phi'(t)-b(\phi(t))}{\sigma(\phi(t))}|^{2}.
\end{equation*}
\end{proof}

\section{L\'evy case}
A stochastic process $L_{t}\in\mathbb{R}, t\in[0,1]$ is called a L\'evy process \cite[Chapter 7]{Du15} if the following properties hold:\\
(1) $L_{0}=0$(a.s.);\\
(2) $L$ has independent increments, i.e., for each $n\in\mathbb{N}$ and $0\leq t_{1}<t_{2}<...<t_{n+1}\leq1$, the random variables $(L_{t_{j+1}}-L_{t_{j}},1\leq j\leq n)$ are independent;\\
(3) $L$ has stationary increments, i.e., for each $n\in\mathbb{N}$ and $0\leq t_{1}<t_{2}<...<t_{n+1}\leq1$, $L_{t_{j+1}}-L_{t_{j}}\stackrel{\rm d}{=}L_{t_{j+1}-t_{j}}$;\\
(4) $L$ is stochastically continuous, i.e., $\lim_{t\downarrow0}\mathbb{P}(|L_{t}|>\varepsilon)=0$ for all $\varepsilon>0$;\\
(5) the paths $t\mapsto L_{t}$ are c\'adl\'ag with probability 1, that is, the trajectories are right continuous with existing left limits.

The L\'evy-It\^o decomposition \cite{S99} of L\'evy process $(L_{t})_{t\in[0,1]}$ with L\'evy triplet $(a,\sigma^{2},\nu)$ is
\begin{equation*}
L_{t}=at+\sigma B_{t}+\int_{0}^{t}\int_{|z|>1}zN(dz,ds)+\int_{0}^{t}\int_{0<|z|\leq1}z\tilde{N}(dz,ds),
\end{equation*}
where $(B_{t})_{t\in[0,1]}$ is a Brownian motion, $N$ denotes the jump counting measure, and $\tilde{N}$ is the compensated jump counting measure. The characteristic function of $(L_{t})_{t\in[0,1]}$
is given by the L\'evy-Khintchine formula \cite{Du15} :
 \begin{equation*}
 \mathbb{E}e^{i\xi L_{t}}=e^{t\psi(\xi)},\,\,\,\,\xi\in\mathbb{R},\,t\in[0,1],
 \end{equation*}
 where $\psi$ is the L\'evy symbol
 \begin{equation*}
\psi(\xi)=ia\xi-\frac{1}{2}\sigma^{2}\xi^{2}+\int_{\mathbb{R}\setminus\{0\}}(e^{i\xi y}-1-i\xi y\chi_{\{|y|\leq1\}})\nu(dy).
 \end{equation*}
There is a one-to-one correspondence between $\psi$ and $(a,\sigma^{2},\nu)$ consisting of the drift parameter $a\in\mathbb{R}$, the diffusion coefficient $\sigma\geq0$, and the L\'evy measure $\nu$ on $(\mathbb{R}\setminus\{0\},\mathcal{B}(\mathbb{R}\setminus\{0\}))$ satisfying $\int_{\mathbb{R}\setminus\{0\}}(y^{2}\wedge1)\nu(dy)<\infty$. Denote the logarithmic moment generating function of $L_{1}$ by
\begin{equation}\label{pu}
\Psi(\xi)=\psi(-i\xi)=a\xi+\frac{1}{2}\sigma^{2}\xi^{2}+\int_{\mathbb{R}\setminus\{0\}}(e^{\xi y}-1-\xi y\chi_{\{|y|\leq1\}})\nu(dy).
\end{equation}
If $\sigma=0$, we say that $(L_{t})_{t\in[0,1]}$ is a L\'evy process without
Gaussian component.

\begin{example}\label{eg}
The L\'evy measure of the a tempered stable L\'evy process $(L_{t})_{t\in[0,1]}$ is
\begin{equation*}
\nu(dy)=\frac{1}{2}\frac{\alpha(\alpha-1)}{\Gamma(2-\alpha)}e^{-my}\frac{dy}{|y|^{1+y}},\,\,\,\,\text{for}\,\alpha\in(1,2),\,m>0.
\end{equation*}
The L\'evy symbol of $(L_{t})_{t\geq0}$ is given by
\begin{equation*}
\psi(\xi)=-(|\xi|^{2}+m^{2})^{\frac{\alpha}{2}}\cos(\alpha\arctan\frac{|\xi|}{m})+m^{\alpha}.
\end{equation*}
\end{example}

\begin{definition}
Assume that L\'evy process $(L_{t})_{t\in[0,1]}$ with L\'evy triplet $(a,\sigma^{2},\nu)$ satisfies $\mathbb{E}e^{\lambda|L_{1}|}<\infty$, for all $\lambda\geq0$. The action functional of  $(L_{t})_{t\in[0,1]}$ on $(\mathcal{D}[0,1], \|\cdot\|_{\infty})$ is defined by
\begin{eqnarray}\label{lrate}
S(\phi)=
\left\{\begin{array}{l}
   \int_{0}^{1}\Psi^{*}(\phi'(t))dt,\,\,\,\,\,\,\,\phi\in AC[0,1],\phi(0)=0,\\
    \infty,\,\,\,\,\,\,\,\,\,\,\,\,\,\,\,\,\,\,\,\,\,\,\,\,\,\,\,\,\,\,\,\,\,\,\,\,\text{otherwise},
   \end{array}\right.
\end{eqnarray}
where $\Psi^{*}(\cdot)$ is the Legendre transform of $\Psi(\cdot)$ in \eqref{pu}.
\end{definition}

\begin{lemma}\label{iBro}
The scaled L\'evy process $L_{t}^{\varepsilon}:=\varepsilon L_{\frac{t}{\varepsilon}}, t\in[0,1]$ satisfies a large deviation principle in $(\mathcal{D}[0,1], \|\cdot\|_{\infty})$ as $\varepsilon\rightarrow0$ with action functional in \eqref{lrate}, i.e.,
\begin{align*}
  \liminf_{\varepsilon\rightarrow0}\varepsilon\log\mathbb{P}(L^{\varepsilon}\in U)&\geq-\inf_{\phi\in U}S(\phi), \\
  \limsup_{\varepsilon\rightarrow0}\varepsilon\log\mathbb{P}(L^{\varepsilon}\in F)&\leq-\inf_{\phi\in F}S(\phi),
\end{align*}
for any open set $U\in\mathcal{B}$ and closed set $F\in\mathcal{B}$.
\end{lemma}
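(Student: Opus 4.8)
The plan is to prove Lemma \ref{iBro} by following the same two‑level scheme used in the Brownian case (Lemma \ref{Bro}), replacing Schilder's theorem by Mogulskii's theorem. First I would reduce to a rescaled random walk. By the independent and stationary increments of $L$, for each $n\in\mathbb{N}$ the process $t\mapsto L^{1/n}_t=\frac1n L_{nt}$, evaluated at the grid points $k/n$, is a random walk $\frac1n\sum_{j=1}^{k}\xi_j$ with i.i.d. steps $\xi_j\stackrel{d}{=}L_1$. The hypothesis $\mathbb{E}e^{\lambda|L_1|}<\infty$ for all $\lambda\ge 0$ guarantees that the logarithmic moment generating function $\Psi(\xi)=\log\mathbb{E}e^{\xi L_1}=\psi(-i\xi)$ from \eqref{pu} is finite and steep on all of $\mathbb{R}$, so its Legendre transform $\Psi^{*}$ is a genuine action functional (lower semicontinuous, with compact sublevel sets after one checks superlinear growth, exactly as $\Lambda^{*}(p)=\frac12 p^2$ in the Brownian case). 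Mogulskii's theorem then gives a sample‑path large deviation principle for the polygonal (linear interpolation) version of this rescaled walk in $(\mathcal{C}[0,1],\|\cdot\|_\infty)$ with action functional $\int_0^1 \Psi^{*}(\phi'(t))\,dt$ on $AC[0,1]$ and $\infty$ otherwise, i.e. precisely \eqref{lrate}. Hence the statement already holds for the time‑discretized, piecewise‑linear approximant; this plays the role of the "$X^{\varepsilon,m}$'' in Lemma \ref{Bro}.

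Second, I would pass from the polygonal approximant to the true scaled Lévy path $L^{\varepsilon}$, which lives in $\mathcal{D}[0,1]$ rather than $\mathcal{C}[0,1]$. The natural route is Theorem \ref{ega}: set $X_{\varepsilon}:=L^{\varepsilon}$ and let $X_{\varepsilon,m}$ be the piecewise‑linear interpolation of $L^{\varepsilon}$ through the grid $\{k/m\}$, which satisfies the LDP with action functional $S_m$ obtained from Mogulskii on the grid of mesh $1/m$; one checks that $\inf_{y\in B(x,\delta)}S_m(y)\to$ (and $\inf_{x\in F}S_m\to$) the corresponding quantities for the limiting functional \eqref{lrate}, so that \eqref{srf} and \eqref{rf} identify $S$ as in \eqref{lrate}. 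What must be verified is the exponentially good approximation condition \eqref{ie}:
\begin{equation*}
\lim_{m\to\infty}\limsup_{\varepsilon\to 0}\varepsilon\log\mathbb{P}\!\left(\|L^{\varepsilon}-X_{\varepsilon,m}\|_\infty>\delta\right)=-\infty.
\end{equation*}
Since on each interval $(k/m,(k+1)/m]$ the difference between the Lévy path and its chord is controlled by the oscillation $\sup_{0\le s\le 1/m}|L_{k/m+s}-L_{k/m}|$ of $L$ on a window of length $1/m$, after rescaling this reduces to estimating $\mathbb{P}(\sup_{0\le s\le 1/m}|L^{\varepsilon}_s|>\delta)$. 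Using the union over $m$ blocks, stationarity of increments, a maximal inequality for Lévy processes (Etemadi's inequality, as already invoked in the proof of Theorem \ref{hai}), and the exponential Chebyshev bound $\mathbb{P}(|L^{\varepsilon}_{1/m}|>\delta/2)\le e^{-\lambda\delta/(2\varepsilon)}\mathbb{E}e^{\lambda|L_{1/(m\varepsilon)}|}= e^{-\lambda\delta/(2\varepsilon)}\exp(\tfrac{1}{m\varepsilon}\log\mathbb{E}e^{\lambda|L_1|}\cdot(\text{up to subadditivity corrections}))$, one gets, after taking $\varepsilon\log$ and $\varepsilon\to 0$, a bound of order $-\lambda\delta/2+\tfrac{c_\lambda}{m}$; letting $m\to\infty$ and then $\lambda\to\infty$ yields $-\infty$, as required.

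The main obstacle I anticipate is precisely this last exponential tightness/oscillation estimate, and two technical subtleties within it. First, the maximal inequality must be applied to the full (possibly heavy‑jump‑free but still jump‑carrying) Lévy process rather than to a Brownian motion; Etemadi's inequality applies to any process with independent increments and together with the finite‑exponential‑moment hypothesis it delivers the needed exponential decay, but one must be careful that the constants are uniform in $m$ and $\varepsilon$ in the right combination (the decay rate scales like $\rho/\sqrt{\varepsilon}$ or $\rho/\varepsilon$ depending on how the scaling is bookkept, and only the $1/\varepsilon$–rate survives multiplication by $\varepsilon$ in $\varepsilon\log\mathbb{P}$). Second, one must confirm that the $J_1$‑Borel $\sigma$‑algebra $\mathcal{B}$ on $\mathcal{D}[0,1]$ is the correct measurable structure and that, because the limiting action functional \eqref{lrate} is supported on $\mathcal{C}[0,1]\subseteq\mathcal{D}[0,1]$ and the uniform topology is used, the LDP transferred from the continuous approximants is indeed an LDP in $(\mathcal{D}[0,1],\|\cdot\|_\infty)$ for the open and closed sets of $\mathcal{B}$ — this is a soft compatibility argument once the exponential approximation is in hand. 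The remaining pieces (finiteness and steepness of $\Psi$, lower semicontinuity and compact sublevel sets of $\phi\mapsto\int_0^1\Psi^{*}(\phi'(t))\,dt$, and the convergence of the discretized functionals $S_m$ to $S$) are routine and parallel the Brownian computations already carried out in Section 3.
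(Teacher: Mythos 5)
Your plan is sound and shares the paper's overall skeleton --- discretize $L^{\varepsilon}$ along a mesh, establish an LDP for the discretization, show the discretization is an exponentially good approximation (using Etemadi's inequality plus exponential Chebyshev bounds made possible by $\mathbb{E}e^{\lambda|L_{1}|}<\infty$), and then identify the rate function as $\int_{0}^{1}\Psi^{*}(\phi'(t))\,dt$ --- but the engine driving the core LDP is genuinely different. The paper does \emph{not} invoke Mogulskii's theorem: it works with the step-function discretization $Z^{L}_{n}/n$ in $\mathcal{D}[0,1]$, proves its exponential tightness by hand, derives an LDP via an infinite-dimensional G\"artner--Ellis argument (computing the limiting log-moment generating functional $\Lambda(\alpha)=\int_{0}^{1}\Psi(\alpha(1)-\alpha(s))\,ds$ over $\alpha\in BV[0,1]\cap\mathcal{D}[0,1]$ and checking G\^ateaux differentiability), obtains the rate in the dual variational form \eqref{function}, proves exponential equivalence of $Z^{L}_{\lfloor1/\varepsilon\rfloor}/\lfloor1/\varepsilon\rfloor$ with $\varepsilon L(\cdot/\varepsilon)$, and only then, in a separate and nontrivial step, shows that finiteness of the variational rate forces absolute continuity and that the rate coincides with \eqref{lrate} (via the truncated transforms $\Lambda_{k}$ and monotone convergence). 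You replace all of that with a citation of Mogulskii / finite-dimensional Cram\'er plus the contraction principle and Theorem \ref{ega}, which is a legitimate and arguably cleaner route if that black box is admitted; what the paper's longer route buys is self-containedness and the explicit dual representation \eqref{function} of the rate, which is what lets it prove directly that $I(\phi)<\infty$ implies $\phi\in AC[0,1]$, $\phi(0)=0$. Two small cautions on your version: for fixed $m$ the LDP of the mesh-$1/m$ interpolant is really finite-dimensional G\"artner--Ellis (the increments are $\varepsilon L_{1/(m\varepsilon)}$, not a sum of $n$ i.i.d.\ terms scaled by $1/n$) followed by contraction, not literally Mogulskii; and the convergence of the discretized functionals $S_{m}$ to $S$ needed for \eqref{srf}--\eqref{rf} uses the convexity of $\Psi^{*}$ (Jensen on each mesh interval), which you should state rather than fold into ``routine.''
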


\begin{proof}
In order to proof that $(L_{t}^{\varepsilon})_{t\in[0,1]}$ satisfies a large deviation
principle as $\varepsilon\rightarrow0$
with the action functional $S$ in \eqref{lrate}, we split the proof into several steps:

(i) The sequence of discretizations $(Z_{n}^{L})_{n\in\mathbb{N}}$ defined by
\begin{equation*}
\frac{Z_{n}^{L}(t,\omega)}{n}:=\frac{1}{n}L(\lfloor n.t\rfloor,\omega)=\frac{1}{n}[\sum_{j=0}^{n-1}L(j,\omega)\chi_{[\frac{j}{n},\frac{j+1}{n})}(t)+L(n,\omega)\chi_{\{1\}}(t)]
\end{equation*}
is exponentially tight in $(\mathcal{D}[0,1], \|\cdot\|_{\infty})$.

Since the mapping
\begin{equation*}
(\mathbb{R}^{n},|\cdot|)\ni x\mapsto(T_{n}x)(t):=\sum_{j=1}^{n-1}x_{j}\chi_{[\frac{j}{n},\frac{j+1}{n})}(t)+x_{n}\chi_{\{1\}}(t)\in(\mathcal{D}[0,1], \|\cdot\|_{\infty})
\end{equation*}
is continuous, we obtain that $T_{n}(K)$ is compact for any compact set $K\subseteq\mathbb{R}^{n}$. For $K\subseteq\mathbb{R}$ compact, we have
\begin{equation*}
\mathbb{P}(\frac{Z_{n}^{L}}{n}\notin T_{n}(K^{n}))\leq\sum_{j=1}^{n-1}\mathbb{P}(\frac{L_{j}}{n}\notin K).
\end{equation*}
The distribution of $\frac{L_{j}}{n}$ is a probability measure on $(\mathbb{R},\mathcal{B}(\mathbb{R}))$, hence $\frac{L_{j}}{n}$ is tight. For $j=1,...,n$, we conclude that $\frac{Z_{n}^{L}}{n}$ is tight in $(\mathcal{D}[0,1], \|\cdot\|_{\infty})$.
Fix $r>0$ and $\epsilon>0$, for $K\subseteq\mathbb{R}$ and $n\geq m$, we get
{\small \begin{align}\nonumber
\mathbb{P}(d(\frac{Z_{n}^{L}}{n},T_{m}(K^{m}))>\epsilon)&\leq\mathbb{P}(\frac{Z_{n}^{L}}{n}\notin T_{n}(K^{n}))+\mathbb{P}(\frac{Z_{n}^{L}}{n}\in T_{n}(K^{n}), d(\frac{Z_{n}^{L}}{n},T_{m}(K^{m}))>\epsilon) \\\label{have}
                                                        &=:I_{1}+I_{2}.
\end{align}}
We choose $K:=[-r,r]$ and estimate the terms separately. Applying Etemadi's inequality and Markov's inequality yields
\begin{equation*}
I_{1}=\mathbb{P}(\sup_{1\leq j\leq n}|\frac{L_{j}}{n}|>r)\leq3\sup_{1\leq j\leq n}\mathbb{P}(|L_{j}|>\frac{nr}{3})\leq3\sup_{1\leq j\leq n}\mathbb{E}e^{|L_{j}|-nr/3}\leq3e^{-nr/3}\beta_{1}^{n},
\end{equation*}
where $\beta_{1}:=\mathbb{E}e^{|L_{1}|}<\infty$ because $(L_{t})_{t\in[0,1]}$ has finite exponential moments. If
we set $f_{m}:=f(\frac{\lfloor m.\rfloor}{m})$, then
\begin{equation}\label{then}
d(f,T_{m}(K^{m}))\leq\|f-f_{m}\|_{\infty},\,\,\,\,\text{for all}\,\,\,\,f\in T_{n}(K^{n}).
\end{equation}
Moreover,
\begin{align}\nonumber
  \|f-f_{m}\|_{\infty}&=\max_{1\leq i\leq m-1}\sup_{t\in[\frac{i}{m},\frac{i+1}{m})}|f(t)-f_{m}(t)|\\ \nonumber
                      &=\max_{1\leq i\leq m-1}\sup_{t\in[\frac{i}{m},\frac{i+1}{m})}|f(\frac{\lfloor nt\rfloor}{n})-f(\frac{\lfloor mt\rfloor}{m})|\\ \label{over}
                      &\leq\max_{1\leq i\leq m-1}\sup_{1\leq j\leq\lfloor\frac{n}{m}\rfloor+1}|f(\frac{\lfloor n\frac{i}{m}\rfloor}{n}+\frac{j}{n})-f(\frac{\lfloor n\frac{i}{m}\rfloor}{n})|.
\end{align}
Combining \eqref{then} and \eqref{over},
\begin{align*}
  I_{2}&\leq\mathbb{P}(\sup_{1\leq i\leq m-1}\sup_{1\leq j\leq\lfloor\frac{n}{m}\rfloor+1}|Z_{n}^{L}(\frac{\lfloor n\frac{i}{m}\rfloor}{n}+\frac{j}{n})-Z_{n}^{L}(\frac{\lfloor n\frac{i}{m}\rfloor}{n})|>n\epsilon)\\
       &\leq\sum_{i=1}^{m-1}\mathbb{P}(\sup_{1\leq j\leq\lfloor\frac{n}{m}\rfloor+1}|L(\lfloor n\frac{i}{m}\rfloor+j)-L(\lfloor n\frac{i}{m}\rfloor)|>n\epsilon).
\end{align*}
By the stationarity and independence of the increments of $L$ and Markov's inequality,
\begin{align*}
I_{2}&\leq m\mathbb{P}(\sup_{1\leq j\leq\lfloor\frac{n}{m}\rfloor+1}|L_{j}|>n\epsilon)\leq3m\sup_{1\leq j\leq\lfloor\frac{n}{m}\rfloor+1}\mathbb{P}(|L_{j}|>\frac{n\epsilon}{3}) \\
     &\leq3m\sup_{1\leq j\leq\lfloor\frac{n}{m}\rfloor+1}\mathbb{E}e^{r|L_{j}|-nr\varepsilon/3}\leq3m\beta_{2}^{\lfloor\frac{n}{m}\rfloor+1}e^{-nr\varepsilon/3}, \\
\end{align*}
where $\beta_{2}:=\mathbb{E}e^{r|L_{1}|}<\infty$. Then
\begin{align*}
\limsup_{n\rightarrow\infty}\frac{1}{n}\log\mathbb{P}(d(\frac{Z_{n}^{L}}{n},T_{m}(K^{m}))>\epsilon)&\leq\max\{\log\beta_{1}-\frac{r}{3},\frac{1}{m}\log\beta_{2}-\frac{r\epsilon}{3}\} \\
  &\stackrel{r,m\rightarrow\infty}{\longrightarrow}-\infty.
\end{align*}
Consequently, $(Z_{n}^{L})_{n\in\mathbb{N}}$ is exponentially tight in $(\mathcal{D}[0,1], \|\cdot\|_{\infty})$.

(ii) $(Z_{n}^{L})_{n\in\mathbb{N}}$ satisfies a large deviation principle in $(\mathcal{D}[0,1], \|\cdot\|_{\infty})$ with respect to $\mathcal{B}$ as $n\rightarrow\infty$ with action functional
\begin{equation}\label{function}
I(\phi)=\sup_{\alpha\in BV[0,1]\cap \mathcal{D}[0,1]}(\int_{0}^{1}\phi d\alpha-\frac{1}{2}\int_{0}^{1}\Psi(\alpha(1)-\alpha(s))ds).
\end{equation}
where $\Psi(\cdot)$ as in \eqref{pu}.
Note that
\begin{equation*}
Z^{L}_{n}=\sum_{j=1}^{n-1}L_{j}\chi_{[\frac{j}{n},\frac{j+1}{n})}+L_{n}\chi_{\{1\}}=\sum_{j=1}^{n}(L_{j}-L_{j-1})\chi_{[\frac{j}{n},1]}.
\end{equation*}
By the stationarity and independence of the increments,
\begin{align*}
\mathbb{E}e^{\langle\alpha,Z^{L}_{n}\rangle}&=\mathbb{E}\exp(\sum_{j=1}^{n}(L_{j}-L_{j-1})(\alpha(1)-\alpha(\frac{j}{n})))\\
                                            &=\prod_{j=1}^{n}\mathbb{E}\exp(L_{1}(\alpha(1)-\alpha(\frac{j}{n}))).
\end{align*}
Since
\begin{equation*}
\mathbb{E}e^{\lambda L_{1}}=e^{\Psi(\lambda)},\,\,\,\,\text{for all}\,\,\,\,\lambda\in\mathbb{R},
\end{equation*}
we have
\begin{equation*}
\Lambda(\alpha):=\lim_{n\rightarrow\infty}\frac{1}{n}\log\mathbb{E}e^{\langle\alpha,Z^{L}_{n}\rangle}=\lim_{n\rightarrow\infty}\frac{1}{n}\sum_{j=1}^{n}\Psi(\alpha(1)-\alpha(\frac{j}{n}))=\int_{0}^{1}\Psi(\alpha(1)-\alpha(s))ds.
\end{equation*}
Pick $\beta\in BV[0,1]\cap \mathcal{D}[0,1]$ and set
\begin{equation*}
u(t,s):=\Psi((\alpha(1)-\alpha(s))+t(\beta(1)-\beta(s))),\,\,\,t\in[-1,1],\,s\in[0,1].
\end{equation*}
From $\alpha$, $\beta\in BV[0,1]$, it follows that $\|\alpha\|_{\infty}+\|\beta\|_{\infty}\leq C<\infty$. By $\mathbb{E}e^{\lambda|L_{1}|}<\infty$, for all $\lambda\geq0$,
we have
\begin{equation*}
-\infty<\log\mathbb{E}e^{-2C|L_{1}|}\leq|u(t,s)|\leq\log\mathbb{E}e^{2C|L_{1}|}<\infty.
\end{equation*}
And then
\begin{equation*}
|\partial_{t}u(t,s)|\leq2C\frac{1}{\mathbb{E}e^{-2C|L_{1}|}}\sqrt{\mathbb{E}(L_{1}^{2})}\sqrt{\mathbb{E}e^{2C|L_{1}|}}<\infty,\,\,\,\,\text{for all}\,\,\,t\in[-1,1].
\end{equation*}
We get
\begin{align*}
\frac{\Lambda(\alpha+t\beta)-\Lambda(\alpha)}{t}&\stackrel{t\rightarrow0}{\longrightarrow}\int_{0}^{1}\partial_{t}u(0,s)ds\\                                                                                         &=\int_{0}^{1}(\beta(1)-\beta(s))\frac{1}{\mathbb{E}e^{L_{1}(\alpha(1)-\alpha(s))}}\mathbb{E}(L_{1}e^{L_{1}(\alpha(1)-\alpha(s))})ds.
\end{align*}
So $\Lambda$ is $\mathcal{D}[0,1]$-G\^ateaux differentiable at $\alpha$, and its derivative equals
\begin{equation*}
D_{\alpha}(t):=\int_{0}^{t}\frac{1}{\mathbb{E}e^{L_{1}(\alpha(1)-\alpha(s))}}\mathbb{E}(L_{1}e^{L_{1}(\alpha(1)-\alpha(s))})ds,\,\,\,t\in[0,1].
\end{equation*}
We defer the rest proof of (ii) to (iv).

(iii) $(Z_{\lfloor\frac{1}{\varepsilon}\rfloor}^{L}/\lfloor\frac{1}{\varepsilon}\rfloor)_{\varepsilon>0}$ and $\varepsilon L(\frac{.}{\varepsilon})$ are exponentially equivalent.

Let $\epsilon>0$ and $r\geq0$. We have
\begin{equation}\label{O}
\|\frac{Z_{\lfloor\frac{1}{\varepsilon}\rfloor}^{L}}{\lfloor\frac{1}{\varepsilon}\rfloor}-\varepsilon L(\frac{.}{\varepsilon})\|_{\infty}\leq(\frac{1}{\lfloor\frac{1}{\varepsilon}\rfloor}-\varepsilon)\|Z_{\lfloor\frac{1}{\varepsilon}\rfloor}^{L}\|_{\infty}+\|\varepsilon Z_{\lfloor\frac{1}{\varepsilon}\rfloor}^{L}-\varepsilon L(\frac{.}{\varepsilon})\|_{\infty}:=C_{\varepsilon}+D_{\varepsilon}.
\end{equation}
We find
\begin{equation}\label{find}
\mathbb{P}(C_{\varepsilon}>\epsilon)\leq\mathbb{P}(\sup_{0\leq k\leq\lfloor\frac{1}{\varepsilon}\rfloor}|L_{k}|>\frac{1}{\varepsilon}(\frac{1}{\varepsilon}-1)\epsilon)\leq3\exp(-\frac{1}{\varepsilon}(\frac{1}{\varepsilon}-1)\frac{\epsilon}{3})\beta_{1}^{\lfloor\frac{1}{\varepsilon}\rfloor},
\end{equation}
where $\beta_{1}=\mathbb{E}e^{|L_{1}|}$ as in (i). Note that
\begin{equation*}
\sup_{t\in[0,1]}|Z_{\lfloor\frac{1}{\varepsilon}\rfloor}^{L}(t)-L(\frac{t}{\varepsilon})|\leq\sup_{0\leq k\leq\lfloor\frac{1}{\varepsilon}\rfloor}\sup_{l\in[0,2]}|L_{k+l}-L_{k}|\,\,\,\,\text{when}\,\,\,\,\frac{t}{\varepsilon}-\lfloor\lfloor\frac{1}{\varepsilon}\rfloor t\rfloor\leq2.
\end{equation*}
By Etemadi's inequality and the stationarity of the increments for $L$,
\begin{equation*}
\mathbb{P}(D_{\varepsilon}>\epsilon)\leq\mathbb{P}(\sup_{0\leq k\leq\lfloor\frac{1}{\varepsilon}\rfloor}\sup_{l\in[0,2]}|L_{k+l}-L_{k}|>\frac{\epsilon}{\varepsilon})\leq 3(\lfloor\frac{1}{\varepsilon}\rfloor+1)\sup_{l\in[0,2]}\mathbb{P}(|L_{l}|>\frac{\epsilon}{3\varepsilon}).
\end{equation*}
Since $(L_{t}-t\mathbb{E}L_{1})_{t\geq0}$ is a martingale, we know that $(e^{r|L_{t}-t\mathbb{E}L_{1}|})_{t\geq0}$ is a submartingale.
By Markov's inequality,
\begin{equation}\label{in}
\sup_{l\in[0,2]}\mathbb{P}(|L_{l}-l\mathbb{E}L_{1}|>\frac{\epsilon}{3\varepsilon})\leq e^{-r\epsilon/3\varepsilon}\mathbb{E}e^{r|L_{2}-2\mathbb{E}L_{1}|}=:\beta_{3}e^{-r\epsilon/3\varepsilon}.
\end{equation}
Combining \eqref{O}, \eqref{find} and \eqref{in} implies
\begin{multline*}
\limsup_{\varepsilon\rightarrow0}\varepsilon\log\mathbb{P}(\|\frac{Z_{\lfloor\frac{1}{\varepsilon}\rfloor}^{L}}{\lfloor\frac{1}{\varepsilon}\rfloor}-\varepsilon L(\frac{.}{\varepsilon})\|_{\infty}>2\epsilon)\\
\,\,\,\,\leq\max\{\limsup_{\varepsilon\rightarrow0}\varepsilon\log\mathbb{P}(C_{\varepsilon}>\epsilon),\limsup_{\varepsilon\rightarrow0}\varepsilon\log\mathbb{P}(D_{\varepsilon}>\epsilon)\}\leq-\frac{r\epsilon}{3}\stackrel{r\rightarrow\infty}{\longrightarrow}-\infty.
\end{multline*}
Hence $(Z_{\lfloor\frac{1}{\varepsilon}\rfloor}^{L}/\lfloor\frac{1}{\varepsilon}\rfloor)_{\varepsilon>0}$ and $\varepsilon L(\frac{.}{\varepsilon})$ are exponentially equivalent.

(iv) $(L_{t}^{\varepsilon})_{t\in[0,1]}$ satisfies a large deviation principle with action functional $I$  in \eqref{function} that
equals the action functional $S$ defined in \eqref{lrate}.

Fix $\varepsilon>0$, $0<s_{1}<t_{1}\leq......\leq s_{n}<t_{n}\leq1$, and $c=(c_{1},...,c_{n})\in\mathbb{R}^{n}$. We define
\begin{equation}\label{define}
\alpha(t):=\sum_{j=1}^{n}c_{j}\chi_{[s_{j},t_{j})}(t),\,\,\,\,t\in[0,1].
\end{equation}
Then $\alpha\in BV[0,1]\cap D[0,1]$ and
\begin{equation}\label{and}
\int_{0}^{1}\phi d\alpha=\sum_{j=1}^{n}c_{j}(\phi(s_{j})-\phi(t_{j})).
\end{equation}
Moreover,
\begin{align}\nonumber
\int_{0}^{1}\log\mathbb{E}e^{L_{1}(\alpha(1)-\alpha(s))}ds&=\sum_{j=1}^{n}\int_{0}^{1}\log\mathbb{E}e^{-c_{j}L_{1}}\chi_{[s_{j},t_{j})}(s)ds\\\label{more}
                                                          &\leq\log\mathbb{E}e^{||c||_{\infty}|L_{1}|}\sum_{j=1}^{n}(t_{j}-s_{j}).
\end{align}
By \eqref{function}, we obtain
\begin{equation*}
\int_{0}^{1}\phi d\alpha\leq I(\phi)+\int_{0}^{1}\log\mathbb{E}e^{L_{1}(\alpha(1)-\alpha(s))}ds.
\end{equation*}
Using \eqref{and} and \eqref{more}, we find
\begin{equation*}
\sum_{j=1}^{n}c_{j}(\phi(s_{j})-\phi(t_{j}))\leq I(\phi)+\log\mathbb{E}e^{||c||_{\infty}|L_{1}|}\sum_{j=1}^{n}(t_{j}-s_{j}).
\end{equation*}
In particular, for $c_{j}:=r{\rm sgn}(f(s_{j})-f(t_{j}))$ and $r>0$,
\begin{equation*}
\sum_{j=1}^{n}|\phi(t_{j})-\phi(s_{j})|\leq\frac{I(\phi)}{r}+\frac{\log\mathbb{E}e^{|L_{1}|r}}{r}\sum_{j=1}^{n}(t_{j}-s_{j}).
\end{equation*}
Choosing $r>0$ sufficiently large and $\delta>0$ sufficiently small, we see that
\begin{equation*}
\sum_{j=1}^{n}(t_{j}-s_{j})<\delta\Longrightarrow\sum_{j=1}^{n}|\phi(t_{j})-\phi(s_{j})|<\epsilon,
\end{equation*}
i.e., $\phi$ is absolutely continuous. Letting $t\rightarrow0$ and $r\rightarrow\infty$,
\begin{equation*}
|\phi(t)|\leq\frac{I(\phi)}{r}+t\frac{\log\mathbb{E}e^{|L_{1}|r}}{r}
\end{equation*}
yields $\phi(0)=0$. Hence $I(\phi)<\infty$ implies that $\phi$ is absolutely continuous and $\phi(0)=0$. Then there exists $f\in L^{1}[0,1]$
such that
\begin{equation*}
\phi(t)=\int_{0}^{t}f(s)ds,\,\,\,\,\,t\in[0,1].
\end{equation*}
So
\begin{align*}
\int_{0}^{1}\phi d\alpha-\int_{0}^{1}\Psi(\alpha(1)-\alpha(s))ds&=\int_{0}^{1}[f(s)(\alpha(1)-\alpha(s))-\Psi(\alpha(1)-\alpha(s))]ds\\
                                                                   &\leq\int_{0}^{1}\Psi^{*}(f(s))ds=\int_{0}^{1}\Psi^{*}(\phi'(s))ds=S(\phi),
\end{align*}
for any $\alpha\in BV[0,1]\cap D[0,1]$. Now we prove $I(\phi)\geq S(\phi)$ for $\phi\in AC[0,1]$, $\phi(0)=0$. By the monotone convergence theorem,
it suffices to show
\begin{equation*}
\int_{0}^{1}\Lambda_{k}(\phi'(s))ds\leq I(\phi)\,\,\,\text{where}\,\,\,\Lambda_{k}(x):=\sup_{|\alpha|\leq k}(\alpha x-\Psi(\alpha)), x\in\mathbb{R}, k\in\mathbb{N}.
\end{equation*}
Note that $\Lambda_{k}$ is convex and locally bounded, hence continuous.
From
\begin{equation*}
\sum_{j=0}^{n-1}\frac{\phi(\frac{j+1}{n})-\phi(\frac{j}{n})}{\frac{1}{n}}\chi_{[\frac{j}{n},\frac{j+1}{n})}(t)\longrightarrow \phi'(t)\,\,\,\,a.s.
\end{equation*}
and the dominated convergence theorem, we get
\begin{equation*}
\int_{0}^{1}\Lambda_{k}(\phi'(t))dt=\lim_{n\rightarrow\infty}\sum_{j=0}^{n-1}\frac{1}{n}\Lambda_{k}(n[\phi(\frac{j+1}{n})-\phi(\frac{j}{n})]).
\end{equation*}
As $\alpha\mapsto \alpha x-\Psi(\alpha)$ is continuous, we can choose $|\alpha(x)|\leq k$ such that
\begin{equation*}
\Lambda_{k}(x)=\alpha(x)x-\Psi(\alpha(x)).
\end{equation*}
For suitable $\alpha_{0}^{n},...,\alpha_{n-1}^{n}$,
\begin{align*}
\int_{0}^{1}\Lambda_{k}(\phi'(t))dt&=\lim_{n\rightarrow\infty}\sum_{j=0}^{n-1}[\alpha_{j}^{n}(\phi(\frac{j+1}{n})-\phi(\frac{j}{n}))-\frac{1}{n}\Psi(\alpha_{j}^{n})] \\
                                &=\lim_{n\rightarrow\infty}(\int_{0}^{1}\phi d\alpha^{n}-\int_{0}^{1}\Psi(\alpha^{n}(1)-\alpha^{n}(t))dt)\leq I(\phi),
\end{align*}
where $\alpha^{n}\in BV[0,1]\cap \mathcal{D}[0,1]$, $n\in\mathbb{N}$, is a step function of the form \eqref{define}.
Consequently, the action funtionals \eqref{lrate} and \eqref{function}, i.e., $S(\phi)=I(\phi)$.
\end{proof}

\begin{remark}
 Lemma \ref{Bro} does not apply to L\'evy processes with infinite moments of order $n$, for some $n\in\mathbb{N}$. In particular, $\alpha$-stable process with symbol $\psi(\xi)=|\xi|^{\alpha}$ is not covered because it has finite first order moment for $\alpha\in(1,2]$. But Lemma \ref{Bro} is valid for the tempered stable L\'evy process $(L_{t})_{t\in[0,1]}$ in Example \ref{eg}.
\end{remark}

\begin{theorem}\label{lan}
Let $b,\sigma, \eta: \mathbb{R}\rightarrow\mathbb{R}$ be bounded, Lipschitz continuous functions. There exists $K>0$ such that
\begin{equation*}
|b(x)-b(y)|+|\sigma(x)-\sigma(y)|+|\eta(x)-\eta(y)|\leq K|x-y|,\,\,\,\,\text{for all}\,\,\,\,x,y\in\mathbb{R}.
\end{equation*}
Let $(B_{t})_{t\geq0}$ be a Brownian motion and $(L_{t})_{t\geq0}$ be an independent L\'evy process with L\'evy triplet $(a,0,\nu)$
and symbol $\psi$ such that $\mathbb{E}e^{\lambda|L_{1}|}<\infty$, for all $\lambda\geq0$. Define a scaled L\'evy process as $L_{t}^{\varepsilon}:=\varepsilon L_{\frac{t}{\varepsilon}}$.
Then the family of solutions $(X^{\varepsilon})_{\varepsilon>0}$ of
\begin{equation}\label{tao}
dX_{t}^{\varepsilon}=b(X_{t-}^{\varepsilon})dt+\sqrt{\varepsilon}\sigma(X_{t-}^{\varepsilon})dB_{t}+\eta(X_{t-}^{\varepsilon})dL_{t}^{\varepsilon}
\end{equation}
satisfies a large deviation principle in $(\mathcal{D}[0,1],\|\cdot\|_{\infty})$ as $\varepsilon\rightarrow0$ with action funtional
\begin{eqnarray}\label{wani}
S(\phi):=
\left\{\begin{array}{l}
\mathrm{}  \inf\{\frac{1}{2}\int_{0}^{1}|g'(t)|^{2}dt+\int_{0}^{1}\Psi^{*}(h'(t))dt\},\,\,\,\,\,\phi\in AC[0,1], \phi(0)=0,\\
    \infty,\,\,\,\,\,\,\,\,\,\,\,\,\,\,\,\,\,\,\,\,\,\,\,\,\,\,\,\,\,\,\,\,\,\,\,\,\,\,\,\,\,\,\,\,\,\,\,\,\,\,\,\,\,\,\,\,\,\,\,\,\,\,\,\,\,\,\,\,\,\,\,\,\,\,\,\,\,\,\,\,\,\,\,\,\,\,\,\,\,\,\,\text{otherwise},
   \end{array}\right.
\end{eqnarray}
where $\Psi^{*}(\cdot)$ denotes the Legendre transform of $\Psi(\cdot)$ defined by
\begin{equation*}
\Psi(\xi):=\psi(-i\xi)=a\xi+\int_{\mathbb{R}\setminus\{0\}}(e^{\xi y}-1-\xi y\chi_{\{|y|\leq1\}})\nu(dy),\,\,\xi\in\mathbb{R},
\end{equation*}
and the infimum is taken over all functions $g,h\in AC[0,1]$, $g(0)=h(0)=0$, such that
\begin{equation*}
\phi(t)=F(g,h)(t)=\int_{0}^{t}b(\phi(s))ds+\int_{0}^{t}\sigma(\phi(s))g'(s)ds+\int_{0}^{t}\eta(\phi(s))h'(s)ds.
\end{equation*}
\end{theorem}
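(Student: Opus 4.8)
The plan is to mimic the proof of Theorem \ref{hai}, now treating the pair $(\sqrt{\varepsilon}B,L^{\varepsilon})$ as the driving noise. First I would record a joint large deviation principle for this pair in the product space $\mathcal{C}[0,1]\times\mathcal{D}[0,1]$, equipped with the sum of the two supremum metrics (and the product of the $\sigma$-algebras considered in Lemmas \ref{Bro} and \ref{iBro}). Since $B$ and $L$ are independent, the law of $(\sqrt{\varepsilon}B_{\cdot},\varepsilon L_{\cdot/\varepsilon})$ is the product of the laws appearing in those two lemmas; both of the individual large deviation principles have good action functionals, so the tensorisation property of large deviations yields that $(\sqrt{\varepsilon}B,L^{\varepsilon})$ satisfies a large deviation principle with action functional
\[
I(g,h)=\frac{1}{2}\int_{0}^{1}|g'(t)|^{2}\,dt+\int_{0}^{1}\Psi^{*}(h'(t))\,dt
\]
when $g,h\in AC[0,1]$ with $g(0)=h(0)=0$, and $I(g,h)=\infty$ otherwise; in particular the sublevel sets $\Phi_{I}(r):=\{(g,h):I(g,h)\le r\}$ are compact.

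Second, I would introduce the Euler-type discretisation
\[
dX_{t}^{\varepsilon,m}=b(X^{\varepsilon,m}_{\lfloor mt\rfloor/m})\,dt+\sqrt{\varepsilon}\,\sigma(X^{\varepsilon,m}_{\lfloor mt\rfloor/m})\,dB_{t}+\eta(X^{\varepsilon,m}_{\lfloor mt\rfloor/m})\,dL_{t}^{\varepsilon},
\]
and show that $(X^{\varepsilon,m})_{\varepsilon>0,m\in\mathbb{N}}$ is an exponentially good approximation of $(X^{\varepsilon})_{\varepsilon>0}$. Exactly as in Theorem \ref{hai}, I would use the stopping time $\tau:=\tau(\rho):=\inf\{t\ge0:|X_{t}^{\varepsilon,m}-X^{\varepsilon,m}_{\lfloor mt\rfloor/m}|>\rho\}\wedge1$, estimate $\mathbb{P}(\sup_{t\le\tau}|X_{t}^{\varepsilon,m}-X_{t}^{\varepsilon}|>\delta)$ by combining the global Lipschitz bounds on $b,\sigma,\eta$ with an exponential (Dol\'eans--Dade type) martingale inequality that controls both the Brownian and the compensated-jump contributions, and bound $\mathbb{P}(\tau<1)$ by a union bound over the $m$ subintervals; the exponential tail estimates for $\sup_{0\le s\le1/m}|L_{k/m+s}^{\varepsilon}-L_{k/m}^{\varepsilon}|$ then come from Etemadi's inequality together with the finite exponential moments of $L_{1}$, just as in steps (i) and (iii) of Lemma \ref{iBro}.

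Third, writing $F^{m}:\mathcal{C}[0,1]\times\mathcal{D}[0,1]\to\mathcal{D}[0,1]$ for the continuous solution map of the discretised scheme (so that $F^{m}(\sqrt{\varepsilon}B,L^{\varepsilon})=X^{\varepsilon,m}$) and $F$ for the map sending absolutely continuous $(g,h)$ to the unique solution $\phi$ of
\[
\phi(t)=\int_{0}^{t}b(\phi(s))\,ds+\int_{0}^{t}\sigma(\phi(s))g'(s)\,ds+\int_{0}^{t}\eta(\phi(s))h'(s)\,ds,
\]
(existence and uniqueness from the Lipschitz hypotheses and Gronwall's lemma, extended measurably off $\{I<\infty\}$), I would verify the uniform-approximation condition \eqref{di} of Theorem \ref{ecp}. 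On $\Phi_{I}(r)$ one has $\int_{0}^{1}|g'|^{2}\le2r$ and $\int_{0}^{1}\Psi^{*}(h')\le r$, and because $\mathbb{E}e^{\lambda|L_{1}|}<\infty$ for every $\lambda$ the function $\Psi$ is finite and smooth on $\mathbb{R}$, so $\Psi^{*}$ is superlinear and $\{h:\int_{0}^{1}\Psi^{*}(h')\le r\}$ consists of functions with uniformly integrable derivatives; this equi-absolute-continuity, together with the Cauchy--Schwarz control of the $\sigma$-term as in Theorem \ref{hai}, gives $\sup_{(g,h)\in\Phi_{I}(r)}\|F^{m}(g,h)-F(g,h)\|_{\infty}\to0$ via the same Gronwall estimate. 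Applying the extended contraction principle (Theorem \ref{ecp}) with $X_{\varepsilon}=(\sqrt{\varepsilon}B,L^{\varepsilon})$, $Y_{\varepsilon}=X^{\varepsilon}$, $f_{m}=F^{m}$ and $f=F$ then yields a large deviation principle for $(X^{\varepsilon})_{\varepsilon>0}$ with action functional $S(\phi)=\inf\{I(g,h):\phi=F(g,h)\}$, which is precisely \eqref{wani}.

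The main obstacle is the exponentially-good-approximation step for the jump-driven part: in contrast with the purely Brownian case, the error $X^{\varepsilon,m}-X^{\varepsilon}$ acquires a stochastic integral against the compensated jumps of $L^{\varepsilon}$, and one needs a genuine exponential-in-$1/\varepsilon$ tail bound for it, which forces the use of an exponential-martingale inequality tailored to processes with jumps and the full strength of the assumption that $L_{1}$ has finite exponential moments of every order. A secondary but essential point, again relying on that assumption, is the superlinearity of $\Psi^{*}$, which is what makes the L\'evy sublevel sets equicontinuous and hence makes the uniform convergence $F^{m}\to F$ on $\Phi_{I}(r)$ possible.
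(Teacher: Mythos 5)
Your proposal follows essentially the same route as the paper: a joint large deviation principle for the pair $(\sqrt{\varepsilon}B,L^{\varepsilon})$ with the sum action functional, an Euler-type discretisation shown to be an exponentially good approximation, uniform convergence $F^{m}\to F$ on sublevel sets via Gronwall, and then the extended contraction principle (Theorem \ref{ecp}). The only difference is that you spell out several points the paper leaves implicit (tensorisation for the joint LDP, the exponential-martingale control of the jump term, and the superlinearity of $\Psi^{*}$ behind the equicontinuity of the sublevel sets), which is a faithful elaboration rather than a different argument.
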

\begin{proof}
Since $(B_{t})_{t\geq0}$ and $(L_{t})_{t\geq0}$ are independent, $(B_{t},L_{t})_{t\geq0}$ is a L\'evy process.
Denote by $(Z_{\lfloor\frac{1}{\varepsilon}\rfloor}^{B}/\lfloor\frac{1}{\varepsilon}\rfloor)_{\varepsilon>0}$ and $(Z_{\lfloor\frac{1}{\varepsilon}\rfloor}^{L}/\lfloor\frac{1}{\varepsilon}\rfloor)_{\varepsilon>0}$ of the
approximations of $(B_{t}^{\varepsilon})_{t\in[0,1]}$ and $(L_{t}^{\varepsilon})_{t\in[0,1]}$. By straightforward modifications for the proof of Lemma \ref{Bro}, $(\sqrt{\varepsilon}B,L^{\varepsilon})_{\varepsilon>0}$ satisfies a large deviation principle in $\mathcal{D}[0,1]\times \mathcal{D}[0,1]$ endowed
with the norm
\begin{equation*}
\|(f_{1},f_{2})\|:=\|f_{1}\|_{\infty}+\|f_{2}\|_{\infty},\,\,\,\,f,g\in \mathcal{D}[0,1],
\end{equation*}
as $\varepsilon\rightarrow0$ with action funtional
\begin{eqnarray}\label{ani}
S(g,h)=
\left\{\begin{array}{l}
\mathrm{}  \frac{1}{2}\int_{0}^{1}|g'(t)|^{2}dt+\int_{0}^{1}\Psi^{*}(h'(t))dt,\,\,\,\,\,g,h\in AC[0,1], g(0)=h(0)=0,\\
    \infty,\,\,\,\,\,\,\,\,\,\,\,\,\,\,\,\,\,\,\,\,\,\,\,\,\,\,\,\,\,\,\,\,\,\,\,\,\,\,\,\,\,\,\,\,\,\,\,\,\,\,\,\,\,\,\,\,\,\,\,\,\,\,\,\,\,\,\,\,\,\,\,\,\,\,\,\,\,\text{otherwise}.
   \end{array}\right.
\end{eqnarray}

For $m\in\mathbb{N}$ define continuous mappings $F^{m}: \mathcal{D}[0,1]\times \mathcal{D}[0,1]\longrightarrow \mathcal{D}[0,1]$ via $\phi=F^{m}(g,h)$, where
\begin{equation*}
\phi(t)=\phi(t_{k}^{m})+b(\phi(t_{k}^{m}))(t-t_{k}^{m})+\sigma(\phi(t_{k}^{m}))(g(t)-g(t_{k}^{m}))+\eta(\phi(t_{k}^{m}))(h(t)-h(t_{k}^{m}))
\end{equation*}
 with $t\in(t_{k}^{m},t_{k+1}^{m}]$, $t_{k}^{m}:=\frac{k}{m}$, $k=0,....,m-1$, and $\phi(0):=\phi(0-)=0$. Using similar arguments as in the proof of Theorem \ref{hai}, the solutions $(X_{t}^{\varepsilon,m})_{m\in\mathbb{N}, \varepsilon>0}=(F^{m}(\sqrt{\varepsilon}B_{t},L_{t}^{\varepsilon}))_{m\in\mathbb{N}, \varepsilon>0}$ of the stochastic differential equation
\begin{equation*}
dX_{t}^{\varepsilon,m}=b(X^{\varepsilon,m}_{\frac{\lfloor mt\rfloor}{m}-})dt+\sqrt{\varepsilon}\sigma(X^{\varepsilon,m}_{\frac{\lfloor mt\rfloor}{m}-})dB_{t}+\eta(X^{\varepsilon,m}_{\frac{\lfloor mt\rfloor}{m}-})dL_{t}^{\varepsilon},\,\,\,\,X_{0}^{\varepsilon,m}=0,
\end{equation*}
are an exponential approximation of $(X^{\varepsilon})_{\varepsilon>0}$. For absolutely continuous functions $g, h\in \mathcal{D}[0,1]$, by $b$ and $\sigma$ are globally Lipschitz continuous, there exists
a unique solution $F(g,h)$ of the integral equation
\begin{equation*}
\phi(t)=\int_{0}^{t}b(\phi(s))ds+\int_{0}^{t}\sigma(\phi(s))g'(s)ds+\int_{0}^{t}\eta(\phi(s))h'(s)ds,\,\,\,\,t\in[0,1],
\end{equation*}
such that
\begin{equation*}
\lim_{m\rightarrow\infty}\sup_{(g,h)\in\Phi(r)}\|F^{m}(g,h)-F(g,h)\|_{\infty}=0,\,\,\,\,\text{for all}\,\,\,r\geq0,
\end{equation*}
where $\Phi(r):=\{(g,h)\in \mathcal{D}[0,1]\times \mathcal{D}[0,1]: S(g,h)\leq r\}$ is the sublevel set of the action functional $S$ defined in \eqref{ani}.
From Theorem \ref{ecp}, it follows that $(X^{\varepsilon})_{\varepsilon>0}$ satisfies a large deviation principle with action functional
$S$ as in \eqref{wani}.
\end{proof}

\begin{remark}
For the special case $\eta=0$, Theorem \ref{lan} concides with Theorem \ref{hai}.
\end{remark}

\begin{corollary}\label{tai}
 The symbol of the solution of the stochastic differential equation
 \begin{equation*}
dX_{t}=b(X_{t-})dt+\sigma(X_{t-})dB_{t}+\eta(X_{t-})dL_{t}
\end{equation*}
 is given by
\begin{align*}
q(x,\xi)&=ib(x)\xi-\frac{1}{2}\sigma^{2}(x)\xi^{2}+\psi(\eta(x)\xi)\\
        &=i(b(x)+a\eta(x))\xi-\frac{1}{2}\sigma^{2}(x)\xi^{2}+\int_{\mathbb{R}\setminus{0}}(e^{iy\eta(x)\xi}-1-iy\eta(x)\xi\chi_{\{|y|\leq1\}})\nu(dy).
\end{align*}
Set
\begin{equation*}
H(x,\xi):=q(x,-i\xi)=(b(x)+a\eta(x))\xi+\frac{1}{2}\sigma^{2}(x)\xi^{2}+\int_{\mathbb{R}\setminus{0}}(e^{y\eta(x)\xi}-1-y\eta(x)\xi\chi_{\{|y|\leq1\}})\nu(dy),
\end{equation*}
and denote the Legendre transform of $H(x,\xi)$ by
\begin{align*}
L(x,\zeta)&=\sup_{\xi\in\mathbb{R}}[\zeta\xi-H(x,\xi)]\\
          &=\sup_{\xi\in\mathbb{R}}[\zeta\xi-(b(x)+a\eta(x))\xi-\frac{1}{2}\sigma^{2}(x)\xi^{2}-\int_{\mathbb{R}\setminus{0}}(e^{y\eta(x)\xi}-1-y\eta(x)\xi\chi_{\{|y|\leq1\}})\nu(dy)].
\end{align*}

Suppose that $L(x,\zeta)$ satisfies\\
(H1) The function $(x,\zeta)\mapsto L(x,\zeta)$ is finite, i.e., for all $x,\zeta\in\mathbb{R}$, $L(x,\zeta)<\infty$. For any $r>0$, there exist constants $C_{1},C_{2}>0$ such that
\begin{equation*}
L(x,\zeta)+|\frac{\partial}{\partial\zeta}L(x,\zeta)|\leq C_{1}\,\,\,\,\,\text{and}\,\,\,\,\frac{\partial^{2}}{\partial\zeta^{2}}L(x,\zeta)> C_{2},\,\,\,\,\text{for all}\,\,\,\,x\in\mathbb{R}, |\zeta|\leq r.
\end{equation*}
(H2) Continuity condition:
\begin{equation*}
\sup_{|x-y|<\delta}\sup_{\zeta\in\mathbb{R}}\frac{L(x,\zeta)-L(y,\zeta)}{1+L(y,\zeta)}\stackrel{\delta\rightarrow0}{\longrightarrow}0.
\end{equation*}
Then the family of solutions $(X^{\varepsilon})_{\varepsilon>0}$ of \eqref{tao} has the action functional:
\begin{eqnarray}\label{xu}
S(\phi):=
\left\{\begin{array}{l}
\mathrm{}  \int_{0}^{1}L(\phi(t),\phi'(t))dt,\,\,\,\,\,\,\,\,\,\,\,\,\,\,\,\,\phi\in AC[0,1], \phi(0)=x,\\
    \infty,\,\,\,\,\,\,\,\,\,\,\,\,\,\,\,\,\,\,\,\,\,\,\,\,\,\,\,\,\,\,\,\,\,\,\,\,\,\,\,\,\,\,\,\,\,\,\,\,\,\,\,\,\,\,\,\,\,\,\,\text{otherwise},
   \end{array}\right.
\end{eqnarray}
where
\begin{align*}
L(\phi(t),\phi'(t))&=\sup_{\xi\in\mathbb{R}}[\phi'(t)\xi-(b(\phi(t))+a\eta(\phi(t)))\xi-\frac{1}{2}\sigma^{2}(\phi(t))\xi^{2} \\
                   &\,\,\,\,\,-\int_{\mathbb{R}\setminus{0}}(e^{y\eta(\phi(t))\xi}-1-y\eta(\phi(t))\xi\chi_{\{|y|\leq1\}})\nu(dy)].
\end{align*}
\end{corollary}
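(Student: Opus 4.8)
The plan is to start from the variational formula for $S(\phi)$ supplied by Theorem~\ref{lan}, localize it in time to produce a pointwise Lagrangian, and then identify that Lagrangian with $L(x,\zeta)=H^{*}(x,\zeta)$ by convex duality. By Theorem~\ref{lan}, for $\phi\in AC[0,1]$ with $\phi(0)=x$ one has
\begin{equation*}
S(\phi)=\inf\Bigl\{\frac{1}{2}\int_{0}^{1}|g'(t)|^{2}\,dt+\int_{0}^{1}\Psi^{*}(h'(t))\,dt\Bigr\},
\end{equation*}
the infimum over $g,h\in AC[0,1]$ with $g(0)=h(0)=0$ subject to the pointwise constraint $\phi'(t)=b(\phi(t))+\sigma(\phi(t))g'(t)+\eta(\phi(t))h'(t)$ for a.e.\ $t$. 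First I would introduce the pointwise value
\begin{equation*}
\ell(x,\zeta):=\inf\Bigl\{\frac{1}{2}u^{2}+\Psi^{*}(v):\ \zeta=b(x)+\sigma(x)u+\eta(x)v\Bigr\},
\end{equation*}
and show that $S(\phi)=\int_{0}^{1}\ell(\phi(t),\phi'(t))\,dt$. The inequality ``$\geq$'' is immediate: for any admissible pair $(g,h)$ the velocities $(g'(t),h'(t))$ are feasible in the definition of $\ell(\phi(t),\phi'(t))$ for a.e.\ $t$, so the integrand dominates $\ell(\phi(t),\phi'(t))$. For ``$\leq$'' I would, for a.e.\ $t$, select a near-minimizer $(u(t),v(t))$ in the definition of $\ell(\phi(t),\phi'(t))$; hypothesis (H1) (finiteness together with $\partial_{\zeta}^{2}L>C_{2}$, i.e.\ strict convexity) makes this minimizer unique and continuously dependent on $(x,\zeta)$, so that $t\mapsto(u(t),v(t))$ is measurable, while the bound $L\leq C_{1}$ on compact velocity ranges together with the superlinear growth of $\Psi^{*}$ — a consequence of the standing assumption $\mathbb{E}e^{\lambda|L_{1}|}<\infty$ for all $\lambda\geq0$, which makes $\Psi$ finite on all of $\mathbb{R}$ — force $u\in L^{2}[0,1]$ and $v\in L^{1}[0,1]$; hypothesis (H2) provides the continuity of $L$ in the state variable needed to keep $g:=\int_{0}^{\cdot}u(s)\,ds$ and $h:=\int_{0}^{\cdot}v(s)\,ds$ admissible. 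By construction $\phi=F(g,h)$, which gives the reverse inequality.

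Next I would identify $\ell(x,\cdot)$ with $L(x,\cdot)$ by convex duality. For fixed $x$, $\ell(x,\cdot)$ is the partial minimization of the jointly convex map $(u,v)\mapsto\frac{1}{2}u^{2}+\Psi^{*}(v)$ along an affine constraint, hence convex, and its conjugate factors:
\begin{align*}
\ell^{*}(x,\xi)&=\sup_{u,v}\Bigl[\xi\bigl(b(x)+\sigma(x)u+\eta(x)v\bigr)-\frac{1}{2}u^{2}-\Psi^{*}(v)\Bigr]\\
&=b(x)\xi+\sup_{u}\Bigl[\xi\sigma(x)u-\frac{1}{2}u^{2}\Bigr]+\sup_{v}\bigl[\xi\eta(x)v-\Psi^{*}(v)\bigr]\\
&=b(x)\xi+\frac{1}{2}\sigma^{2}(x)\xi^{2}+\Psi(\eta(x)\xi),
\end{align*}
where I used that $\bigl(\frac{1}{2}(\cdot)^{2}\bigr)^{*}=\frac{1}{2}(\cdot)^{2}$ and $\Psi^{**}=\Psi$ (valid since $\Psi$, as a finite cumulant generating function, is closed and convex). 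Expanding $\Psi(\eta(x)\xi)$ by the L\'evy--Khintchine representation of $\Psi$ gives $\ell^{*}(x,\xi)=H(x,\xi)$, precisely the function $q(x,-i\xi)$ written in the statement. Since $H(x,\cdot)$ is finite and convex under the exponential-moment hypothesis, $L(x,\cdot)=H^{*}(x,\cdot)=\ell^{**}(x,\cdot)$; and $\ell(x,\cdot)$ being convex and, by (H1), finite on $\mathbb{R}$ hence closed, one obtains $\ell(x,\cdot)=\ell^{**}(x,\cdot)=L(x,\cdot)$. Combined with the first step this yields $S(\phi)=\int_{0}^{1}L(\phi(t),\phi'(t))\,dt$, which is \eqref{xu}; the only change from Theorem~\ref{lan} is the initial value $\phi(0)=x$ in place of $\phi(0)=0$, and Theorem~\ref{lan} applies with this obvious modification.

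I expect the genuine obstacle to be the time-localization step rather than the duality computation, which is routine. The interchange of the path-space infimum with the time integral is exactly what hypotheses (H1) and (H2) are designed to underwrite: (H1) supplies finiteness, strict convexity (hence unique and continuously varying minimizers) and uniform bounds on compact velocity ranges, and (H2) supplies the continuity of $L$ in the state variable; together they make the measurable-selection and approximation argument (of Freidlin--Wentzell type, as in the Lagrangian large deviation results for diffusions) go through. A secondary point that needs care is the integrability of the selected controls where $|\phi'|$ is large, which is controlled by the superlinear growth of $\Psi^{*}$ noted above.
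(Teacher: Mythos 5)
Your proposal is correct in substance, but it takes a genuinely different --- and considerably more explicit --- route than the paper. The paper offers no argument for this corollary beyond the computation already embedded in its statement: it computes the symbol $q(x,\xi)$ of the solution, sets $H(x,\xi)=q(x,-i\xi)$, takes the Legendre transform $L=H^{*}$, and asserts the Lagrangian form of the action functional, in exact parallel with Theorem \ref{nhai} for the Brownian case; the link between the variational formula \eqref{wani} of Theorem \ref{lan} and the local form \eqref{xu} is left implicit, with (H1) and (H2) imported as standing hypotheses of Feng--Kurtz type. You instead derive \eqref{xu} from \eqref{wani}: you localize the control problem in time to obtain the value function $\ell(x,\zeta)=\inf\{\tfrac{1}{2}u^{2}+\Psi^{*}(v):\ \zeta=b(x)+\sigma(x)u+\eta(x)v\}$, prove $S(\phi)=\int_{0}^{1}\ell(\phi,\phi')\,dt$ by a measurable-selection argument, and identify $\ell=L$ by computing $\ell^{*}=H$ and invoking biduality. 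The duality computation is right (the conjugate of the infimal projection along the affine constraint factors as $b(x)\xi+\tfrac{1}{2}\sigma^{2}(x)\xi^{2}+\Psi^{**}(\eta(x)\xi)$, and $\Psi^{**}=\Psi$ since $\Psi$ is a finite closed convex cumulant generating function), and the passage from $\ell^{**}=L$ back to $\ell=L$ is legitimate under (H1), since finiteness of $L=\ell^{**}$ on all of $\mathbb{R}$ forces $\mathrm{dom}\,\ell=\mathbb{R}$, so the convex function $\ell(x,\cdot)$ is continuous, hence closed, hence equal to its biconjugate. Your approach buys an actual proof of the identity the paper only asserts, and it makes visible exactly where (H1)/(H2) enter (attainment, uniqueness and measurability of the minimizing controls, integrability of $u$ and $v$ via the superlinearity of $\Psi^{*}$); the paper buys brevity by outsourcing this step to the general symbol-to-Lagrangian correspondence. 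The one step you should still write out in full is the measurable-selection/time-localization argument, which you correctly identify as the genuine obstacle but only sketch.
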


\begin{remark}
The action functionals \eqref{ani} and \eqref{xu}. The L\'evy symbol $\psi$ is twice differentiable because $(L_{t})_{t\geq0}$ has finite exponential moments. Then $\xi\mapsto H(x,\xi)$ is twice  differentiable and so is its Legendre transform $\zeta\mapsto L(x,\zeta)$.
\end{remark}

\begin{example}
Let $U: \mathbb{R}\rightarrow\mathbb{R}$ be a smooth enough function with a global point of minimum $0\in\mathbb{R}$. We consider the stochastic differential equation
\begin{equation*}
dX^{\varepsilon}_{t}=-\nabla U(X^{\varepsilon}_{t})dt+\varepsilon dL_{t}^{\varepsilon},\,\,\,X^{\varepsilon}_{0}=0.
\end{equation*}
Here, the scaled L\'evy process $L^{\varepsilon}$ is given by
\begin{equation*}
L_{t}^{\varepsilon}=\int_{0}^{t}\int_{\mathbb{R}}z\tilde{N}^{\frac{1}{\varepsilon}}(ds,dz),\,\, t\in[0,1],
\end{equation*}
where $\tilde{N}^{\frac{1}{\varepsilon}}$ is the compensated Poisson random measure with compensator $\frac{1}{\varepsilon}ds\bigotimes\nu$.
The intensity measure $\nu$ has the form
\begin{equation*}
\nu(dz)=e^{-|z|^{\alpha}}dz, \,\,\,\,\text{for some}\,\,\alpha>0.
\end{equation*}
The action functional of $(X^{\varepsilon}_{t})_{t\in[0,1]}$ is
\begin{eqnarray*}
S(\phi):=
\left\{\begin{array}{l}
\mathrm{}  \inf\{\int_{0}^{1}\int_{\mathbb{R}}(g(t,z)\ln g(t,z)-g(t,z)+1)\nu(dz)dt\},\,\,\,\phi\in AC[0,1], \phi(0)=0,\\
    \infty,\,\,\,\,\,\,\,\,\,\,\,\,\,\,\,\,\,\,\,\,\,\,\,\,\,\,\,\,\,\,\,\,\,\,\,\,\,\,\,\,\,\,\,\,\,\,\,\,\,\,\,\,\,\,\,\,\,\,\,\,\,\,\,\,\,\,\,\,\,\,\,\,\,\,\,\,\,\,\,\,\,\,\,\,\,\,\,\,\,\,\,\,\,\,\,\,\,\,\,\,\,\,\,\,\,\,\,\,\,\,\,\,\,\,\,\,\,\,\text{otherwise},
   \end{array}\right.
\end{eqnarray*}
such that
\begin{equation*}
\phi(t)=-\int_{0}^{t}\nabla U(\phi(s))ds+\int_{0}^{t}\int_{\mathbb{R}}z(g(s,z)-1)\nu(dz)ds.
\end{equation*}
\end{example}

\par\bigskip\noindent
{\bf Acknowledgment.} We would like to thank Wei Wei, Yong Chen, Franziska K\"uhn and Hina Zulfiqar for helpful discussions. This work was partly supported
by the National Natural Science Foundation of China (NSFC) Grant Nos. 11771161 and 11771449.

\bibliographystyle{amsplain}

\end{document}